\tikzstyle{map}=[->,semithick]
\tikzstyle{arc}=[bend left,->,semithick]
\tikzstyle{rinclusion}=[right hook->,semithick]
\tikzstyle{linclusion}=[left hook->,semithick]
\newtheoremstyle{myremark} 
{7pt}                    
{7pt}                    
{}  	                 
{}                           
{\bf}       	         
{.}                          
{.5em}                       
{}  
\theoremstyle{plain}
\newtheorem{lemma}{Lemma}[section]
\newtheorem{theorem}[lemma]{Theorem}
\newtheorem{corollary}[lemma]{Corollary}
\newtheorem{proposition}[lemma]{Proposition}
\theoremstyle{definition}
\newtheorem*{definition-cnk}{Definition~\ref{def:cnk}}
\newtheorem*{question-motivating}{Motivating Question}
\newcounter{parentnumber}
\theoremstyle{myremark}
\newcommand{\R}{\mathbb{R}}
\newcommand{\cU}{\mathcal{U}}
\newcommand{\cV}{\mathcal{V}}
\newcommand{\cW}{\mathcal{W}}
\newcommand{\diam}{\mathrm{diam}}
\newcommand{\id}{\mathrm{id}}
\newcommand{\gh}{\mathrm{GH}}
\newcommand{\supp}{\mathrm{supp}}
\newcommand{\ord}{\mathrm{ord}}
\newcommand{\card}{\mathrm{card}}
\newcommand{\vr}[2]{\mathrm{VR}(#1;#2)}
\newcommand{\viet}[1]{\mathrm{V}(#1)}
\newcommand{\vietm}[1]{\mathrm{V}^\mathrm{m}(#1)}
\newcommand{\vrleq}[2]{\mathrm{VR}_\le(#1;#2)}
\newcommand{\vrm}[2]{\mathrm{VR}^\mathrm{m}(#1;#2)}
\newcommand{\vrmleq}[2]{\mathrm{VR}^\mathrm{m}_\le(#1;#2)}
\newcommand{\cech}[2]{\mathrm{\check{C}}(#1;#2)}
\newcommand{\acech}[3]{\mathrm{\check{C}}(#1,#2;#3)}
\newcommand{\cechm}[2]{\mathrm{\check{C}}^\mathrm{m}(#1;#2)}
\newcommand{\acechm}[3]{\mathrm{\check{C}}^\mathrm{m}(#1,#2;#3)}
\begin{document}

\title{Vietoris thickenings and complexes of manifolds are homotopy equivalent}

\author{Henry Adams}
\author{Alexandre Karassev}
\author{\v{Z}iga Virk}

\begin{abstract}
We show that if $X$ is a finite-dimensional Polish metric space, then the natural bijection $\vr{X}{r}\to\vrm{X}{r}$ from the (open) Vietoris--Rips complex to the Vietoris--Rips metric thickening is a homotopy equivalence.
This occurs, for example, if $X$ is a Riemannian manifold.
The same is true for the map \v{C}$(X;r)\to$\v{C}$^\mathrm{m}(X;r)$
from the \v{C}ech complex to the \v{C}ech metric thickening, and more generally, for the natural bijection $\viet{\cW}\to\vietm{\cW}$ from the Vietoris complex to the Vietoris metric thickening of any uniformly bounded cover $\cW$ of a finite dimensional Polish metric space.
We also show that if $X$ is a compact metrizable space, then $\vietm{\cW}$ is strongly locally contractible.
\end{abstract}

\subjclass[2020]{55N31, 54F45, 54C55, 54C65}

\keywords{Vietoris--Rips complexes, \v{C}ech complexes, metric thickenings, dimension theory, absolute neighborhood retracts, Michael's selection theorem}

\maketitle


\section{Introduction}

Let $X$ be a metric space, and let $r>0$.
The \emph{Vietoris--Rips simplicial complex $\vr{X}{r}$} has $X$ as its vertex set, and a finite subset $\sigma\subseteq X$ as a simplex when its diameter is less than $r$.
Vietoris--Rips complexes were invented by 
Vietoris in a cohomology theory for metric spaces~\cite{Vietoris27,lefschetz1942algebraic}.
Independently, they were introduced by 
Rips in geometric group theory as a natural way to thicken a space.
Indeed, Rips used these complexes to show that torsion-free hyperbolic groups have Eilenberg--MacLane spaces that are finite simplicial complexes~\cite{bridson2011metric}.

More recently, Vietoris--Rips complexes and persistent homology have become commonly used tools in applied and computational topology, motivated by applications to
data analysis~\cite{CarlssonIshkhanovDeSilvaZomorodian2008,ghrist2008barcodes}.
For $M$ a manifold and scale $r$ sufficiently small depending on the curvature of $M$, Hausmann~\cite{Hausmann1995} proves there is a homotopy equivalence $\vr{M}{r} \simeq M$.
Furthermore, for dataset $X$ sufficiently close to $M$ in the Gromov--Hausdorff distance, Latschev~\cite{Latschev2001} proves there is a homotopy equivalence $\vr{X}{r} \simeq M$.

If $X$ is not a discrete metric space, then the inclusion $X\hookrightarrow\vr{X}{r}$ is not continuous, since the vertex set of a simplicial complex is equipped with the discrete topology.
For example, if $M$ is a manifold of dimension at least one, then $M\hookrightarrow\vr{M}{r}$ is not continuous.
This observation helped motivate the definition of Vietoris--Rips metric thickenings in~\cite{AAF}.
For $X$ a metric space and $r>0$, the \emph{Vietoris--Rips metric thickening $\vrm{X}{r}$} is the space of finitely-supported probability measures on $X$ whose support has diameter less than $r$, equipped with an optimal transport metric.
The inclusion $X\hookrightarrow\vrm{X}{r}$, obtained by mapping a point $x\in X$ to the Dirac delta measure $\delta_x$, is now an isometric embedding.

What is the relationship between Vietoris--Rips simplicial complexes and metric thickenings?
For $X$ a finite metric space, $\vr{X}{r}$ and $\vrm{X}{r}$ are homeomorphic~\cite
{AAF}.
For $X$ an arbitrary metric space, Gillespie showed that the natural bijection $\vr{X}{r}\to\vrm{X}{r}$ is a weak homotopy equivalence, i.e., induces isomorphisms on all homotopy groups~\cite{gillespie2024vietoris}, improving upon~\cite{HA-FF-ZV}.
The question remains if $\vr{X}{r}$ and $\vrm{X}{r}$ are homotopy equivalent.
Since $\vr{X}{r}$ is a simplicial complex (and hence a CW complex), this result would be implied by Whitehead's theorem if $\vrm{X}{r}$ were known to be an absolute neighborhood retract, 
but to the best of our knowledge this question remains unknown for general metric spaces $X$.

We prove that if $X$ is a finite-dimensional Polish metric space and $r>0$, then
\begin{itemize}
\item $\vrm{X}{r}$ is an absolute neighborhood retract, and hence
\item the natural bijection $\vr{X}{r}\to\vrm{X}{r}$ from the (open) Vietoris--Rips complex to the Vietoris--Rips metric thickening is a homotopy equivalence.
\end{itemize}
This occurs, for example, if $X=M$ is a Riemannian manifold.

As a result, we get a version of Latschev's theorem for metric thickenings, resolving~\cite[Conjecture~6.9]{AAR} in the affirmative; see Corollary~\ref{cor:latschev}.

More generally, for $\cW$ a uniformly bounded cover of a Polish metric space, we show that the natural bijection $\viet{\cW}\to\vietm{\cW}$ from the Vietoris complex to the Vietoris metric thickening is a homotopy equivalence; see Theorem~\ref{thm:main}.
This also implies that the map $\cech{X}{r}\to\cechm{X}{r}$ from the (open) \v{C}ech complex to the \v{C}ech metric thickening is a homotopy equivalence.

Furthermore, if $X$ is a compact metrizable space, then we show that $\vietm{\cW}$ is strongly locally contractible, using Michael's selection theorem~\cite{michael1956continuous}.

We begin in Section~\ref{sec:preliminaries} with background and definitions.
In Section~\ref{sec:homotopy-equiv} we prove our main result, Theorem~\ref{thm:main}.
We prove that $\vietm{\cW}$ is strongly locally contractible for $X$ a compact metrizable space in Section~\ref{sec:strong-local-contractibility}.
In Section~\ref{sec:conclusion} we conclude with a list of open questions.


\section{Preliminaries}
\label{sec:preliminaries}

We provide some background on metric spaces, Vietoris--Rips and \v{C}ech simplicial complexes, Vietoris simplicial complexes,  spaces of measures, metric thickenings, absolute neighborhood retracts, Whitehead's theorem, dimension, and local contractibility.

\subsection{Metric spaces}

Let $X=(X,d)$ be a metric space.
The \emph{diameter} of a subset $A\subseteq X$ is defined as $\diam(A)=\sup\{d(a,a')~|~a,a'\in A\}$.
The \emph{open ball} of radius $r>0$ centered at $x\in X$ is $B(x;r)=\{y\in X~|~d(y,x)<r\}$, which we sometimes denote as $B_X(x;r)$.

\subsection{Vietoris--Rips and \v{C}ech simplicial complexes}

Let $X$ be a metric space and let $r>0$.

The \emph{Vietoris--Rips simplicial complex $\vr{X}{r}$} has $X$ as its vertex set, and a finite subset $\sigma\subseteq X$ as a simplex when $\diam(\sigma)<r$.

The \emph{intrinsic \v{C}ech simplicial complex} $\cech{X}{r}$ has $X$ as its vertex set, and a finite subset $\sigma\subseteq X$ as a simplex when $\cap_{x\in \sigma}B(x;r)\neq\emptyset$.

If $Z\supseteq X$ is a metric space extending the metric on $X$, then the \emph{ambient \v{C}ech simplicial complex} $\acech{X}{Z}{r}$ has $X$ as its vertex set, and a finite subset $\sigma\subseteq X$ as a simplex when $\cap_{x\in \sigma}B_Z(x;r)\neq\emptyset$.

A common example of an ambient \v{C}ech complex is when $X\subseteq Z=\R^n$.
When $Z=X$, we note that the ambient \v{C}ech complex recovers the intrinsic \v{C}ech complex, namely $\acech{X}{X}{r}=\cech{X}{r}$.
For this reason, we can state our results with ambient \v{C}ech complexes while still handling the case of intrinsic \v{C}ech complexes.

We identify simplicial complexes with their geometric realizations.

\subsection{Vietoris simplicial complexes}
\label{ssec:vietoris}

Given a cover $\cW$ of a metric space, the \emph{Vietoris complex} $\viet{\cW}$ of $\cW$ has $X$ as its vertex set, and a finite subset $\sigma \subseteq X$ as a simplex when $\sigma$ is contained in an element of $\cW$.
By Dowker duality~\cite{dowker1952topology}, $\viet{\cW}$ is homotopy equivalent to the nerve of $\cW$.

Let $ r>0$.
When $\cW$ consists of all open subsets of diameter less than $r$, we have $\viet{\cW}=\vr{X}{r}$.
If instead $\cW=\{B_X(x;r)\}_{x\in X}$ consists of all open $r$-balls in $X$, then $\viet{\cW}=\cech{X}{r}$.
For $Z\supseteq X$ a metric space extending the metric on $X$, if $\cW=\{X\cap B_Z(z;r)\}_{z\in Z}$, then $\viet{\cW}=\acech{X}{Z}{r}$.

\subsection{Spaces of measures}

We follow~\cite{villani2008optimal,fedorchuk1991probability}.


For $X$ a topological space, we let $P(X)$ be the space of probability measures on the Borel sigma algebra of $X$, equipped with the weak topology.
The weak topology on $P(X)$ is induced by convergence against $C_b(X)$, the bounded continuous test functions.
In more detail, a sequence $\mu_1, \mu_2, \mu_3, \ldots$ in $P(X)$ is said to converge weakly to $\mu\in P(X)$ if for all bounded continuous functions $f\colon X\to \R$, we have $\lim_{n \to \infty} \int_X f(x)\,d\mu_n = \int_X f(x)\,d\mu$.

For $k\ge 0$, let $P_k(X)$ be the set of all measures in $P(X)$ whose support have cardinality at most $k$.
So, a measure $\mu \in P_k(X)$ can be written as $\mu=\sum_{i=0}^{k-1}\lambda_i \delta_{x_i}$ for some $x_i\in X$ and $\lambda_i \ge 0$ with $\sum_{i=0}^{k-1}\lambda_i=1$.
The \emph{support} of a measure $\mu = \sum_{i=0}^{k-1}\lambda_i \delta_{x_i} \in P_k(X)$ is $\supp(\mu)=\{x_i~|~\lambda_i>0\}$.

If $(X,d)$ is a metric space, then we may also define optimal transport metrics on $\cup_{k\ge \infty}P_k(X)$, the space of all finitely supported measures.
Given finitely supported measures $\mu_1$ and $\mu_2$, a \emph{coupling} between them is a probability measure $\mu$ on $X\times X$ with marginals $\mu_1$ and $\mu_2$.
This means $(\pi_1)_\sharp\mu=\mu_1$ and $(\pi_2)_\sharp \mu= \mu_2$, where $\pi_i\colon X\times X\to X$ is the projection map defined by $\pi_i(x_1,x_2)=x_i$ for $i=1,2$, and where $\sharp$ denotes the pushforward.
For $q\in[1,\infty)$ we may equip $\cup_{k\ge \infty}P_k(X)$ with the $q$-\emph{Wasserstein-Kantorovich-Rubinstein distance}, given by
\begin{equation}
d_{W^q}(\mu_1,\mu_2) = \inf_{\mu} \left(\int_{X\times X}(d(x,x'))^q\,d\mu\right)^{\frac{1}{q}},
\end{equation}
where $\mu$ varies over all couplings between $\mu_1$ and $\mu_2$.


A \emph{Polish space} $X$ is a separable completely metrizable topological space, that is, a space homeomorphic to a complete metric space that has a countable dense subset.


Let $k\ge 1$, and let $q\in [1,\infty)$.
If $X$ is a bounded Polish metric space, then the $q$-Wasserstein-Kantarovoich-Rubenstein metric on $P_k(X)$ agrees with the topology induced by weak convergence on $P_k(X)$ by~\cite[Corollary~6.13]{villani2008optimal}.
In turn, the topology induced by weak convergence coincides with the weak topology on $P(X)$ (see, e.g.,~\cite[Definitions~8.1.1 and 8.1.2]{bogachev2007measure} and~\cite{kallianpur1961topology}).
As a consequence, different choices of $q\in [1,\infty)$ all induce the same topology on $P_k(X)$, and we have the following observation.

\begin{proposition}
\label{proposition:weaktopology}
Let $X$ be a bounded Polish space.
Then the weak topology on $P_k(X)$ coincides with the topology induced by the $q$-Wasserstein-Kantarovoich-Rubenstein metric for any $q\in [1,\infty)$.
\end{proposition}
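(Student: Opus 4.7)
The plan is to assemble two well-known facts, both of which are essentially cited in the paragraph immediately preceding the proposition, and verify that the passage from the ambient space $P(X)$ to the subspace $P_k(X)$ is harmless.

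First I would recall that on $P(X)$ itself (not just on $P_k(X)$) there are two a priori distinct topologies in play: the \emph{weak topology}, defined as the coarsest topology making the maps $\mu\mapsto\int_X f\,d\mu$ continuous for every $f\in C_b(X)$, and the \emph{topology of weak convergence}, characterized by the convergence of sequences against $C_b(X)$. For $X$ a metrizable (in particular Polish) space, these two topologies coincide: this is classical and is what the paper references through~\cite[Definitions~8.1.1 and 8.1.2]{bogachev2007measure} and~\cite{kallianpur1961topology}. Since $P_k(X)\subseteq P(X)$, both topologies restrict to give the same subspace topology on $P_k(X)$.

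Next I would invoke~\cite[Corollary~6.13]{villani2008optimal}, which asserts that for $X$ a Polish metric space the $q$-Wasserstein distance $d_{W^q}$ metrizes weak convergence on the subspace $P^q(X)\subseteq P(X)$ of Borel probability measures with finite $q$-th moment. The boundedness of $X$ ensures that $\int_X d(x,x_0)^q\,d\mu<\infty$ uniformly in $\mu$, so $P(X)=P^q(X)$, and in particular $P_k(X)\subseteq P^q(X)$. Consequently the $d_{W^q}$-topology and the weak-convergence topology agree on $P_k(X)$.

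Combining the two steps yields the claimed coincidence of topologies on $P_k(X)$, and shows in addition that the choice of $q\in[1,\infty)$ is immaterial. The only mild subtlety, and the step I would be most careful about, is the distinction between the weak topology and the topology of weak convergence: in general these can differ, but under the separability and metrizability assumptions built into ``Polish'' they do not, which is precisely why the hypothesis that $X$ be Polish (rather than merely a metric space) is required. No further estimates or constructions are needed beyond citing these two results.
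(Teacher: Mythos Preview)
Your proposal is correct and matches the paper's approach exactly: the paper does not give a separate proof but rather states the proposition as an observation following the paragraph that cites~\cite[Corollary~6.13]{villani2008optimal} for the equivalence of the Wasserstein metric with weak convergence and~\cite{bogachev2007measure,kallianpur1961topology} for the coincidence of weak convergence with the weak topology. Your added remark that boundedness of $X$ forces $P(X)=P^q(X)$ makes explicit the one point the paper leaves implicit.
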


\subsection{Metric thickenings}

For $X$ a metric space and $r>0$, the \emph{Vietoris--Rips metric thickening} is the space
\[\vrm{X}{r}=\left\{\sum_{i=0}^{k-1}\lambda_i\delta_{x_i}\in P(X)~\mid~k\ge 1,\ \lambda_i\ge 0,\ \sum_i\lambda_i=1,\ \diam(\{x_0,\ldots,x_{k-1}\})<r\right\},\]
equipped with the optimal transport (or $1$-Wasserstein-Kantarovoich-Rubenstein) metric.

More generally, for an open cover $\cW$ of $X$, the \emph{metric thickening} of $\viet{\cW}$ is the space
\[
\vietm{\cW}=\left\{\sum_{i=0}^{k-1}\lambda_i\delta_{x_i}\in P(X)~\mid~k\ge 1,\ \lambda_i\ge 0,\ \sum_i\lambda_i=1,\ \exists W \in \cW: \{x_0,\ldots,x_{k-1}\} \subseteq W \right\},
\]
equipped with the optimal transport metric.
Choosing $\cW$ as in Section~\ref{ssec:vietoris} gives the \emph{\v{C}ech metric thickenings} $\cechm{X}{r}$ and $\acechm{X}{Z}{r}$.

The natural bijection $\vr{X}{r}\to \vrm{X}{r}$ is the continuous map defined by sending a point $\sum \lambda_i x_i$ in the geometric realization of $\vr{X}{r}$ to the probability measure $\sum \lambda_i\delta_{x_i}$ in $\vrm{X}{r}$.
The same is true for the natural bijection $\viet{\cW}\to \vietm{\cW}$.

In Corollary~\ref{cor:main} we show that for $X$ a finite-dimensional Polish metric space, the natural bijection $\vr{X}{r}\to\vrm{X}{r}$ is a homotopy equivalence.\footnote{
We remark this result is false if we had defined the Vietoris--Rips complex and metric thickening with the $\le$ convention instead of the $<$ convention, meaning a simplex is included in $\vr{X}{r}$ if its diameter is $\le r$ instead of $<r$.
Indeed, $\vrleq{X}{0}$ is $X$ with the discrete topology, whereas $\vrmleq{X}{0}$ is $X$ with its standard topology induced from its metric.
}

\subsection{Topological spaces}

For $Y$ a topological space and $y\in Y$, we let $\pi_n(Y,y)$ denote the $n$-th homotopy group of $Y$, with basepoint $y$.

A subset $A$ of a space $Y$ is a \emph{retract of $Y$} if there is a map $r\colon Y\to A$ with $r|_A=\id_A$.
This implies that $A$ is closed in $Y$.
A \emph{neighborhood retract of $Y$} is a closed set in $Y$
that is a retract of some neighborhood in $Y$.
A metrizable space $Y$ is called an
\emph{absolute neighborhood retract (ANR)} if $Y$ is a
neighborhood retract of any metrizable space that contains $Y$ as a closed subspace.
See~\cite{sakai2013geometric} for more information on ANRs.

By~\cite[Corollary~6.6.5]{sakai2013geometric} a space has the homotopy type of a simplicial complex if and only if it has the homotopy type of an ANR.

\subsection{Whitehead's theorem}

Let $Y$ and $Y'$ be topological spaces.
A map $f\colon Y\to Y'$ is a \emph{weak homotopy equivalence} if it induces
\begin{itemize}
\item a bijection from the path components of $Y$ to the path components of $Y'$, and
\item an isomorphism $\pi_n(Y,y)\to\pi_n(Y',y)$ for all $n\ge 1$ and $y\in Y$.
\end{itemize}

The classical version of Whitehead's theorem states that if $Y$ and $Y'$ are CW complexes, then every weak homotopy equivalence $f\colon Y\to Y'$ is a homotopy equivalence~\cite{Hatcher}.

The CW complex assumption can be relaxed, as follows.
By~\cite[Corollary~6.6.6]{sakai2013geometric}, if $Y$ and $Y'$ are ANRs, then every weak homotopy equivalence $f\colon Y\to Y'$ is a homotopy equivalence.


\subsection{Dimension}

We refer the reader to Chapter~5 of Sakai's book~\cite{sakai2013geometric} for background on dimension.

Let $Y$ be a topological space, and let $\cU$ be an open cover of $Y$.
For any point $y\in Y$, let $\cU[y]=\{U\in\cU~|~y\in U\}$ be the subcollection of sets in $\cU$ containing $y$, and let $\card(\cU[y])$ be the cardinality of this set.
The \emph{order} of $\cU$ is $\ord(\cU)=\sup\{\card(\cU[y])~|~y\in Y\}$.
We define the \emph{(covering) dimension $\dim Y$ of $Y$} as follows.
Let $n$ be a nonnegative integer.
If every finite open cover of $Y$ has a finite open refinement $\cU$ with $\ord(\cU)\le n+1$, then $\dim Y\le n$.
(When $Y$ is paracompact, the word ``finite'' can be removed in both places in the prior sentence, still yielding an equivalent definition.)
We have $\dim Y=n$ if $\dim Y\le n$ and $\dim Y\nleq n-1$, in which case we say that the space $Y$ is \emph{$n$-dimensional}.
The space $Y$ is \emph{finite-dimensional} if $\dim Y\le n$ for some finite $n$, and otherwise $Y$ is \emph{infinite-dimensional}.

The space $Y$ is \emph{countable-dimensional} if it is a countable union of finite-dimensional normal subspaces.
Though a subspace of a normal space need not be normal, a subspace of a metric space is a metric space and hence is normal.
Therefore, a metric space $X$ is countable-dimensional if it is a countable union of finite-dimensional subspaces.

The countable sum theorem~\cite[Theorem~5.4.1]{sakai2013geometric} says that if the countable union $Y=\cup_{i\ge 1}F_i$ is normal, if each $F_i$ is closed in $Y$, and if $\dim F_i\le n$ for every $i\ge 1$, then $\dim Y\le n$.

Let $X$ be a metric space.
The subset theorem~\cite[Theorem~5.3.3]{sakai2013geometric} says that for every subset $A$ of a metrizable space $X$, we have $\dim A \le \dim X$.

The decomposition theorem~\cite[Theorem~5.4.5]{sakai2013geometric} states that a metric space $X$ satisfies $\dim X\le n$ if and only if $X$ is covered by $n+1$ many $0$-dimensional subsets $X_1$, \ldots, $X_{n+1}$.
It follows that a metric space $X$ is countable-dimensional if and only if it is a countable union of 0-dimensional subspaces.


By~\cite[Theorem~6.10.1]{sakai2013geometric}, every countable-dimensional locally contractible metrizable space is an ANR.

\subsection{Local contractibility}
\label{ssec:local-contractibility}

A topological space $Y$ is \emph{locally contractible at $y \in Y$} if for every neighborhood $U$ of $y$, there is a neighborhood $y\in V\subseteq U$ such that the inclusion $V\hookrightarrow U$ is nullhomotopic in $U$.
The space $Y$ is \emph{locally contractible} if it is locally contractible at each $y\in Y$.

A topological space $Y$ is \emph{strongly locally contractible at $y \in Y$} if for every neighborhood $U$ of $y$, there is a contractible neighborhood $y\in V\subseteq U$.
In other words, if every point $y\in Y$ has a local base of contractible neighborhoods, then we say that $Y$ is \emph{strongly locally contractible}.

A neighborhood $U$ \emph{strongly} deformation retracts onto $y$ if the deformation retract fixes the point $y$ throughout the homotopy.
If every point $y\in Y$ has a local base of neighborhoods which strongly deformation retract onto $y$, then we say that $Y$ is \emph{strictly strongly locally contractible}.

\section{$\viet{\cW}\to\vietm{\cW}$ is a homotopy equivalence for $X$ finite-dimensional}
\label{sec:homotopy-equiv}






We prove that if $\cW$ is a uniformly bounded open cover of a finite-dimensional metric space $X$, then the Vietoris complex of $\cW$ and its metric thickening are homotopy equivalent.
A cover $\cW$ of $X$ is \emph{uniformly bounded} if there exists some $D>0$ such that $\diam(W) < D$ for all $W \in \cW$.

\begin{theorem}
\label{thm:main}
Let $\cW$ be a uniformly bounded open cover of a finite-dimensional Polish metric space $X$.
The natural bijection $\viet{\cW}\to\vietm{\cW}$ from the  Vietoris complex to the Vietoris metric thickening is a homotopy equivalence.
\end{theorem}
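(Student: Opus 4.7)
The plan is to apply the ANR version of Whitehead's theorem~\cite[Corollary~6.6.6]{sakai2013geometric}: every weak homotopy equivalence between ANRs is a homotopy equivalence. Since $\viet{\cW}$ is a simplicial complex and hence an ANR, the theorem reduces to two sub-claims: (i) the natural bijection $\viet{\cW}\to\vietm{\cW}$ is a weak homotopy equivalence, and (ii) $\vietm{\cW}$ is itself an ANR.

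For (i), I would adapt Gillespie's argument~\cite{gillespie2024vietoris} from Vietoris--Rips complexes to arbitrary Vietoris complexes. Given a continuous map $(D^{n+1},S^n)\to(\vietm{\cW},\viet{\cW})$, one takes a sufficiently fine triangulation and straightens the map simplex-by-simplex into $\viet{\cW}$, producing a homotopy inside $\vietm{\cW}$. Gillespie's construction uses only that every finite subset of an \emph{admissible} (diameter-$<r$) set is a simplex of $\vr{X}{r}$; the analog for $\viet{\cW}$ is that every finite subset of any $W\in\cW$ is a simplex, and the uniform bound $D$ on $\diam(W)$ replaces the fixed scale $r$, so the argument should transfer with only notational changes.

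For (ii), I would invoke~\cite[Theorem~6.10.1]{sakai2013geometric}: a countable-dimensional, locally contractible, metrizable space is an ANR. Metrizability is automatic via the optimal transport metric. For countable-dimensionality, filter
\[
\vietm{\cW}=\bigcup_{k\ge 1}\vietm{\cW}_{\le k}
\]
by support cardinality. Each $\vietm{\cW}_{\le k}$ sits inside $P_k(X)$, which is the image of $X^k\times\Delta^{k-1}$ under $(\underline{x},\underline{\lambda})\mapsto\sum\lambda_i\delta_{x_i}$; since $X$ is finite-dimensional separable, $X^k\times\Delta^{k-1}$ has finite dimension by the product theorem, and standard dimension bounds for symmetric products of metric spaces (together with the subset theorem) give $\dim\vietm{\cW}_{\le k}<\infty$. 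The countable sum theorem then yields countable-dimensionality of $\vietm{\cW}$.

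The hard part, and the main obstacle, is local contractibility of $\vietm{\cW}$ for $X$ Polish but not compact. The strong local contractibility result in Section~\ref{sec:strong-local-contractibility} uses Michael's selection theorem in a way that exploits compactness of $X$ to continuously select, for each nearby measure, an element of $\cW$ containing its support; no such global selection exists in general Polish spaces. A workable strategy is to perform the selection only on a well-chosen bounded subset of $X$ around a fixed measure $\mu$, leveraging that uniform boundedness of $\cW$ together with the finitely-supported nature of measures in $\vietm{\cW}$ forces the bulk of any Wasserstein-close measure $\nu$ to lie near $\supp(\mu)$. One then patches local contractions via a partition-of-unity argument in $\vietm{\cW}$. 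Making this patching continuous in the base measure, and correctly handling the small mass that $\nu$ may place far from $\supp(\mu)$, is the subtle point that the rest of the section will need to address.
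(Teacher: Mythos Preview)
Your overall architecture matches the paper's: weak homotopy equivalence plus ANR, then Whitehead. But you misidentify where the work lies.

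The passage you call ``the hard part, and the main obstacle'' --- local contractibility of $\vietm{\cW}$ for non-compact Polish $X$ --- is not an obstacle at all. The paper simply cites \cite[Theorem~2]{HA-FF-ZV}, which already gives (ordinary) local contractibility of $\vietm{\cW}$ in the generality needed; no selection argument, no patching, no compactness hypothesis. You are conflating this with the \emph{strong} local contractibility of Section~\ref{sec:strong-local-contractibility}, which does use Michael's theorem and does assume compactness, but plays no role whatsoever in the proof of Theorem~\ref{thm:main}. So the elaborate program you sketch in your last paragraph is unnecessary, and the proof is much shorter than you anticipate: local contractibility is one citation, countable-dimensionality is Lemma~\ref{lem:countable-dimensional} (essentially your filtration argument), and then \cite[Theorem~6.10.1]{sakai2013geometric} gives the ANR property.

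A second, smaller issue: your assertion that ``$\viet{\cW}$ is a simplicial complex and hence an ANR'' is not safe. In the convention used here (Sakai's), ANRs are metrizable by definition, and a simplicial complex with uncountable vertex set and the CW topology is not metrizable; for non-discrete $X$ the complex $\viet{\cW}$ has uncountably many vertices. The paper handles this correctly: it invokes \cite[Corollary~6.6.5]{sakai2013geometric} to produce an ANR $Y$ with a homotopy equivalence $g\colon Y\to\viet{\cW}$, applies the ANR Whitehead theorem to the composite $fg\colon Y\to\vietm{\cW}$, and then concludes that $f$ is a homotopy equivalence by two-out-of-three. Finally, for step~(i) no adaptation of Gillespie is required --- the paper cites \cite{gillespie2024vietoris} directly for the weak homotopy equivalence in the Vietoris-complex generality.
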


The following corollary is immediate by choosing $\cW$ to be as in Section~\ref{ssec:vietoris}.

\begin{corollary}
\label{cor:main}
Let $X$ be a finite-dimensional Polish metric space, and let $r>0$.
The natural bijection $\vr{X}{r}\to\vrm{X}{r}$ is a homotopy equivalence.
For $Z\supseteq X$ a metric space extending the metric on $X$, the natural bijection $\acech{X}{Z}{r}\to\acechm{X}{Z}{r}$ is a homotopy equivalence.
\end{corollary}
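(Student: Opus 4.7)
The plan is to derive Corollary~\ref{cor:main} as a direct specialization of Theorem~\ref{thm:main}, which was already stated in the excerpt. The only work is to exhibit, in each of the two cases, a specific uniformly bounded open cover $\cW$ of $X$ for which Section~\ref{ssec:vietoris} identifies $\viet{\cW}$ with the desired complex and $\vietm{\cW}$ with the desired metric thickening, and then invoke Theorem~\ref{thm:main}.

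For the Vietoris--Rips statement, I would take $\cW$ to be the collection of all open subsets $W \subseteq X$ with $\diam(W) < r$. This collection is uniformly bounded with constant $D = r$. To see it is an open cover, observe that for any $x\in X$ the open ball $B(x;r/3)$ has diameter at most $2r/3 < r$ and contains $x$, so it lies in $\cW$. By the identification in Section~\ref{ssec:vietoris}, $\viet{\cW} = \vr{X}{r}$, and by the definition of the Vietoris metric thickening, $\vietm{\cW} = \vrm{X}{r}$ (both are characterized by the diameter-less-than-$r$ condition on supports, since a finite set has diameter less than $r$ iff it is contained in some open set of diameter less than $r$, for instance a small enough open neighborhood of the set itself). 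Theorem~\ref{thm:main} then yields that $\vr{X}{r}\to\vrm{X}{r}$ is a homotopy equivalence.

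For the ambient \v{C}ech statement, I would take $\cW = \{X \cap B_Z(z;r)\}_{z\in Z}$. Each element has diameter at most $2r$, so $\cW$ is uniformly bounded, say with $D = 3r$. It covers $X$ since $x \in X \cap B_Z(x;r) \in \cW$ for every $x\in X$, and each $X\cap B_Z(z;r)$ is open in $X$ because $B_Z(z;r)$ is open in $Z$. Section~\ref{ssec:vietoris} gives $\viet{\cW} = \acech{X}{Z}{r}$ and the analogous definition of the \v{C}ech metric thickening gives $\vietm{\cW} = \acechm{X}{Z}{r}$, so Theorem~\ref{thm:main} again concludes the argument.

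Since both cases are a matter of checking that $\cW$ is an open cover and uniformly bounded, there is no genuine obstacle here; the substantive mathematics is entirely inside Theorem~\ref{thm:main}. The only place worth a sentence of care is confirming the equality $\vietm{\cW} = \vrm{X}{r}$ in the first case: a finitely supported probability measure lies in the right-hand side iff its (finite) support has diameter less than $r$, and such a support is always contained in some open set of diameter less than $r$ (namely, a sufficiently small open enlargement of the support), so the two descriptions coincide.
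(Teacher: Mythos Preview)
Your proposal is correct and follows exactly the paper's approach: the paper simply states that the corollary is immediate from Theorem~\ref{thm:main} upon choosing $\cW$ as in Section~\ref{ssec:vietoris}, and you have spelled out precisely those choices together with the routine verifications that each $\cW$ is a uniformly bounded open cover and that $\vietm{\cW}$ agrees with the corresponding metric thickening.
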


The proof of Theorem~\ref{thm:main} will rely on showing that in this setting, $\vietm{\cW}$ is countable-dimensional and an ANR.
The proof that $\vietm{\cW}$ is countable-dimensional is closely related to the fact that if $X$ is a finite-dimensional metric space, then for any integer $k$, the space of all probability measures with support of size at most $k$ is finite-dimensional.
See for example Equation~(3.1) on Page~57 of~\cite{fedorchuk1991probability}, 
which follows from a more general result of Basmanov~\cite{basmanov1983covariant}.

\begin{lemma}
\label{lem:countable-dimensional}
If $\cW$ is a uniformly bounded open cover of a finite-dimensional Polish metric space $X$, then $\vietm{\cW}$ is countable-dimensional.
\end{lemma}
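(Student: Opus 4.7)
The strategy is to exhibit $\vietm{\cW}$ as a countable union of finite-dimensional metric subspaces. For each integer $k\ge 1$, I set
\[
M_k \;=\; \{\mu\in\vietm{\cW}\mid |\supp(\mu)|\le k\},
\]
so that $\vietm{\cW}=\bigcup_{k\ge 1}M_k$. By the definition of countable-dimensional, it then suffices to show that each $M_k$ is finite-dimensional as a subspace of $\vietm{\cW}$.

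As a set, $M_k$ is contained in $P_k(X)$. By the result of Basmanov cited just before the lemma (Equation~(3.1) on page~57 of~\cite{fedorchuk1991probability}), if $P_k(X)$ is equipped with the weak topology, then $\dim P_k(X)<\infty$ with an explicit bound in terms of $\dim X$ and $k$. In view of the subset theorem~\cite[Theorem~5.3.3]{sakai2013geometric}, it would follow that $\dim M_k<\infty$ once I verify that the Wasserstein topology that $M_k$ inherits from $\vietm{\cW}$ agrees with the weak topology that $M_k$ inherits from $P_k(X)$.

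The reconciliation of these two topologies is the main technical point and is where the uniform boundedness of $\cW$ enters. Wasserstein convergence always implies weak convergence, so only the reverse direction needs argument. Given $\mu_n\to\mu$ weakly in $M_k$, I would set $S=\supp(\mu)$. Testing weak convergence against a continuous bump function supported in the $\eps$-neighborhood of $S$ that equals $1$ on $S$ shows that $\mu_n(\{x\in X\mid d(x,S)<\eps\})\to 1$, so eventually $\supp(\mu_n)$ meets the $\eps$-neighborhood of $S$. Because $\diam(\supp(\mu_n))<D$, where $D$ is a uniform diameter bound for $\cW$, the whole of $\supp(\mu_n)$ eventually lies within the $(\eps+D)$-neighborhood of $S$. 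Thus, from some index on, the measures $\mu_n$ together with $\mu$ are all supported on the bounded Polish subspace $\cl(\{x\in X\mid d(x,S)<\eps+D\})$, to which Proposition~\ref{proposition:weaktopology} applies to give that weak and $W_1$ convergence coincide; hence $W_1(\mu_n,\mu)\to 0$, as needed.

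Combining these steps, each $M_k$ is homeomorphic to a subspace of the finite-dimensional space $P_k(X)$ (in the weak topology) and so has $\dim M_k<\infty$; and $\vietm{\cW}=\bigcup_{k\ge 1}M_k$ is therefore countable-dimensional. The only nontrivial ingredient is the topology comparison in paragraph three; everything else is a routine combination of Basmanov's dimension estimate with the stratification by support size.
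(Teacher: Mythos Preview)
Your proof is correct and follows essentially the same strategy as the paper: stratify $\vietm{\cW}$ by support size, invoke Basmanov's bound $\dim P_k(X)\le k\dim X+(k-1)$ for the weak topology, and reconcile the weak and Wasserstein topologies via Proposition~\ref{proposition:weaktopology}. The only difference is in how that reconciliation is carried out: you use the uniform diameter bound on $\cW$ to trap any weakly convergent sequence in $M_k$ inside a single bounded Polish set, whereas the paper instead writes $P_k(X)=\bigcup_{i\ge 1} P_k(X_i)$ over closed balls $X_i\subseteq X$, applies Proposition~\ref{proposition:weaktopology} on each piece, and then uses the countable sum theorem---an argument that never invokes the uniform boundedness of $\cW$ and in fact yields the slightly stronger intermediate conclusion that $P_k(X)$ with the Wasserstein metric is itself finite-dimensional.
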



\begin{proof}
Recall
\[
\vietm{\cW}=\left\{\sum_{i=0}^{k-1}\lambda_i\delta_{x_i}\in P(X)~\mid~k\ge 1,\ \lambda_i\ge 0,\ \sum_i\lambda_i=1,\ \exists W \in \cW: \{x_0,\ldots,x_{k-1}\} \subseteq W \right\}.
\]
For any integer $k\ge 0$, let
\[
\vietm{\cW}_k=\left\{\sum_{i=0}^{k-1}\lambda_i\delta_{x_i}\in P(X)~\mid~\lambda_i\ge 0,\ \sum_i\lambda_i=1,\ \exists W \in \cW: \{x_0,\ldots,x_{k-1}\} \subseteq W \right\}
\]
be the set of all measures in $\vietm{\cW}$ with support of size at most $k$.
Note that we can write the Vietoris metric thickening as the countable union $\vietm{\cW}=\cup_{k\ge 1}\vietm{\cW}_k$.
So, in order to show that $\vietm{\cW}$ is countable-dimensional, it suffices to show that each $\vietm{\cW}_k$ is finite-dimensional.

Let $Y$ be a topological space.
For $k$ an integer, recall that $P_k(Y)$ is the set of measures with support of size at most $k$,
\[P_k(Y)=\{\mu\in P(Y)~|~\card(\supp(\mu))\le k\},\]
equipped with the weak topology.
For $Y$ a completely regular space, Equation (5.1) of~\cite{fedorchuk1991probability} states $\dim(P_k(Y))\le k \dim(Y)+(k-1)$; see also Basmanov~\cite{basmanov1983covariant}.

Since $X$ is a metric space, it is completely regular, giving $\dim(P_k(X))\le k \dim(X)+(k-1) \eqqcolon D_k$.
Fix a point $a\in X$ and for any positive integer $i\ge 1$, let $X_n = \{x\in X\mid d(a,x)\le i \}$.
Then  the Wasserstein-Kantarovoich-Rubenstein metric on $P_k(X_i)$ agrees with the weak topology on $P_k(X_i)$ by Proposition~\ref{proposition:weaktopology}.
By the subset theorem~\cite[Theorem~5.3.3]{sakai2013geometric}, the inclusion $P_k(X_i)\subseteq P(X)$ gives $\dim(P_k(X_i))\le D_k$ for all integers $i \ge 1$.
Since $P_k(X) =\cup_{i\ge 1} P_k(X_i)$, the countable cum theorem~\cite[Theorem~5.4.1]{sakai2013geometric} implies $\dim P_k(X)\le D_k$.
Finally, the inclusion $\vietm{\cW}_k\subseteq P_k(X)$ gives $\dim(\vietm{\cW}_k)\le D_k$, again by the subset theorem.


Therefore $\vietm{\cW}=\cup_{k\ge 1}\vietm{\cW}_k$ is a countable union of finite-dimensional spaces, and hence is countable-dimensional.
\end{proof}

\begin{lemma}
\label{lem:anr}
If $\cW$ is a uniformly bounded open cover of a finite-dimensional Polish metric space $X$, then $\vietm{\cW}$ is an ANR.
\end{lemma}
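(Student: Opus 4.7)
My plan is to invoke the characterization that a metrizable, countable-dimensional, locally contractible space is an ANR, stated in the excerpt as~\cite[Theorem~6.10.1]{sakai2013geometric}. Two of these three hypotheses are immediate: $\vietm{\cW}$ is metrizable by construction, since it carries the $1$-Wasserstein-Kantarovoich-Rubenstein metric, and it is countable-dimensional by Lemma~\ref{lem:countable-dimensional}. The entire remaining task is therefore to show that $\vietm{\cW}$ is locally contractible.

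To establish local contractibility at a point $\mu_0 = \sum_{i=1}^k \lambda_i \delta_{x_i} \in \vietm{\cW}$, I would choose $W_0 \in \cW$ with $\supp(\mu_0) \subseteq W_0$ and, using openness of $W_0$ together with finiteness of $\supp(\mu_0)$, extract $\eps > 0$ so that $\bigcup_{i=1}^k B(x_i;\eps) \subseteq W_0$. Given any open neighborhood $U$ of $\mu_0$ in $\vietm{\cW}$, I want a smaller open neighborhood $V$ with $\mu_0 \in V \subseteq U$ admitting a nullhomotopy inside $U$. The natural tool is the convex structure of measures: the straight-line path $t \mapsto (1-t)\nu + t\mu_0$ remains inside $\vietm{\cW}$ as long as $\supp(\nu) \cup \supp(\mu_0) \subseteq W$ for a single $W \in \cW$, and in particular whenever $\supp(\nu) \subseteq W_0$, in which case the path lies entirely in the affinely convex set $\{\nu \in \vietm{\cW} \mid \supp(\nu) \subseteq W_0\}$.

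The main obstacle is that Wasserstein closeness does not imply support containment: a measure $\nu$ arbitrarily close to $\mu_0$ may carry an atom of tiny mass far outside $W_0$, so long as the entire $\supp(\nu)$ still lies in some (different) element $W_\nu \in \cW$. Before applying the straight-line contraction I therefore need a continuous first stage that deforms such measures into the subspace $\{\nu \in \vietm{\cW} \mid \supp(\nu) \subseteq W_0\}$, while arranging that every intermediate support stays inside a single element of $\cW$; otherwise the deformation escapes $\vietm{\cW}$. I expect to construct this stage either by transporting the atoms of $\nu$ along an optimal coupling with $\mu_0$, by a barycentric procedure based on a partition of unity subordinate to $\cW$, or by an application of Michael's selection theorem in the spirit of Section~\ref{sec:strong-local-contractibility}. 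Regardless of the implementation, the delicate point--and the hard part of the argument--is the open-cover condition along the homotopy. Once this preprocessing stage is in place, composing it with the straight-line contraction inside $W_0$ yields the desired nullhomotopy of $V$ in $U$ and finishes the proof.
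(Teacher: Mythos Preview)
Your overall strategy is exactly the paper's: invoke \cite[Theorem~6.10.1]{sakai2013geometric} after checking metrizability, countable-dimensionality (via Lemma~\ref{lem:countable-dimensional}), and local contractibility. The divergence is that the paper does not prove local contractibility at all here; it simply cites \cite[Theorem~2]{HA-FF-ZV}. That citation is the entire content of the third hypothesis, and with it the proof of Lemma~\ref{lem:anr} is two lines.

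Your proposal, by contrast, attempts to reprove local contractibility from scratch, and that attempt is incomplete. You correctly isolate the obstruction---a measure $\nu$ Wasserstein-close to $\mu_0$ may carry mass outside $W_0$---and you correctly note that the hard part is keeping the homotopy inside $\vietm{\cW}$ (i.e., keeping every intermediate support inside some element of $\cW$). But you then offer three candidate mechanisms (optimal-coupling transport, a partition-of-unity barycentric move, or Michael selection) without carrying any of them out, and none is routine. The Michael-selection route you allude to is essentially what Section~\ref{sec:strong-local-contractibility} develops, but that argument is nontrivial and, as written in the paper, uses compactness of $X$ rather than merely the finite-dimensional Polish hypothesis. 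So as a self-contained proof your proposal has a real gap at exactly the step you flag as ``the hard part''; the paper sidesteps it by citing the prior result.
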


\begin{proof}
The space $\vietm{\cW}$ is locally contractible by~\cite[Theorem~2]{HA-FF-ZV}, and it is countable-dimensional by Lemma~\ref{lem:countable-dimensional}.
Hence $\vietm{\cW}$ is an ANR by~\cite[Theorem~6.10.1]{sakai2013geometric}.
\end{proof}

We are now prepared to prove our main result.

\begin{proof}[Proof of Theorem~\ref{thm:main}]
Let $X$ be a finite-dimensional Polish metric space, and let $\cW$ be a uniformly bounded open cover of $X$.
Let $f\colon \viet{\cW}\to\vietm{\cW}$ denote the natural bijection.
We must show that $f$ is a homotopy equivalence

The natural bijection $f\colon\viet{\cW}\to\vietm{\cW}$ is a weak homotopy equivalence by Gillespie's work~\cite{gillespie2024vietoris}.
The space $\viet{\cW}$ is a simplicial complex by definition, and the space $\vietm{\cW}$ is an ANR by Lemma~\ref{lem:anr}.
We can now combine Corollaries~6.6.5 and~6.6.6 of~\cite{sakai2013geometric} to get that $f\colon\viet{\cW}\to\vietm{\cW}$ is a homotopy equivalence.
Indeed, let $g\colon Y\to \viet{\cW}$ be a homotopy equivalence from a space $Y$ which is an ANR to the simplicial complex $\viet{\cW}$, which exists by~\cite[Corollary~6.6.5]{sakai2013geometric}.
Then $fg\colon Y\to \vietm{\cW}$ is a weak homotopy equivalence between ANRs, which is therefore a homotopy equivalence by~\cite[Corollary~6.6.6]{sakai2013geometric}.
Let $(fg)^{-1}$ denote a homotopy inverse of $fg$.
Since $g$ and $fg$ are each homotopy equivalences, the two-out-of-three property gives that the natural bijection $f\colon\viet{\cW}\to\vietm{\cW}$ is a homotopy equivalence (with $g(fg)^{-1}$ as a homotopy inverse), as desired.
\end{proof}

Theorem~\ref{thm:main} (or Corollary~\ref{cor:main}) has the following consequences.
The first is Latschev's theorem~\cite{Latschev2001} for metric thickenings.

\begin{corollary}
\label{cor:latschev}
Let $M$ be a closed Riemannian manifold.
There exists $r_0>0$ such that for every $0<r\le r_0$, there is some $\delta>0$ such that for any finite-dimensional Polish metric space $X$ with $d_\gh(X,M)<\delta$, we have $\vrm{X}{r}\simeq M$.
\end{corollary}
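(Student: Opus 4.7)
The plan is to deduce the corollary by composing two homotopy equivalences: the classical Latschev theorem on the simplicial-complex side, and the new homotopy equivalence $\vr{X}{r}\to\vrm{X}{r}$ from Corollary~\ref{cor:main}.

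First, I would invoke Latschev's theorem~\cite{Latschev2001}, as recalled in the introduction: since $M$ is a closed Riemannian manifold, there exist $r_0>0$ and, for every $0<r\le r_0$, some $\delta>0$ such that whenever $d_\gh(X,M)<\delta$ one has a homotopy equivalence $\vr{X}{r}\simeq M$. These are precisely the constants that will serve in the statement of the corollary.

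Next, fix such $X$ and $r$. By hypothesis $X$ is a finite-dimensional Polish metric space, so Corollary~\ref{cor:main} applies: the natural bijection $\vr{X}{r}\to\vrm{X}{r}$ is a homotopy equivalence. Composing this with the Latschev equivalence $\vr{X}{r}\simeq M$ gives $\vrm{X}{r}\simeq M$, as desired.

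The only point requiring any care is that Latschev's theorem be applied to the metric space $X$ under the stated hypotheses; since $X$ is Polish (in particular separable and complete), it is totally bounded whenever it is Gromov--Hausdorff-close to the compact space $M$, so Latschev's hypotheses are satisfied. There is no genuine obstacle: the whole content of the corollary is the bridge supplied by Corollary~\ref{cor:main} between the simplicial Vietoris--Rips complex and the metric thickening in the finite-dimensional Polish setting.
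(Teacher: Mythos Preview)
Your proposal is correct and matches the paper's proof exactly: invoke Latschev's theorem for the simplicial complex $\vr{X}{r}\simeq M$, then apply Corollary~\ref{cor:main} to pass to $\vrm{X}{r}$. Your final paragraph is unnecessary, since Latschev's theorem~\cite{Latschev2001} applies to an arbitrary metric space $X$ with $d_\gh(X,M)<\delta$ and imposes no compactness or total boundedness hypothesis on $X$; there is nothing to verify beyond the Gromov--Hausdorff bound.
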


\begin{proof}
The result is true with Vietoris--Rips simplicial complexes by~\cite[Theorem~1.1]{Latschev2001}, and then we apply Corollary~\ref{cor:main}.
\end{proof}

The above result resolves \cite[Conjecture~6.9]{AAR} in the affirmative.

\begin{corollary}
\label{cor:interleaving}
Let $X$ be a finite-dimensional Polish metric space.

The natural bijection $\vr{X}{r}\to\vrm{X}{r}$ from the (open) Vietoris--Rips complex to the Vietoris--Rips metric thickening is a $0$-homotopy interleaving, and hence these filtrations have isomorphic persistent homology modules.

For $Z\supseteq X$ a metric space extending the metric on $X$, the natural bijection $\acech{X}{Z}{r}\to\acechm{X}{Z}{r}$ from the (open) \v{C}ech complex to the \v{C}ech metric thickening is a $0$-homotopy interleaving, and hence these filtrations have isomorphic persistent homology modules.
\end{corollary}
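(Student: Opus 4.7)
The plan is to upgrade Corollary~\ref{cor:main}, which gives a homotopy equivalence at each individual scale parameter, into a statement about the entire filtration. The key observation is that the natural bijection is not just a pointwise equivalence but a \emph{strict} morphism of filtrations: for any $r \le r'$, the square
\[
\begin{tikzcd}
\vr{X}{r} \ar[r,hook] \ar[d,"f_r"'] & \vr{X}{r'} \ar[d,"f_{r'}"] \\
\vrm{X}{r} \ar[r,hook] & \vrm{X}{r'}
\end{tikzcd}
\]
commutes on the nose, since both compositions map $\sum_i \lambda_i x_i$ to $\sum_i \lambda_i \delta_{x_i}$. The analogous square for the \v{C}ech filtration commutes for the same reason.

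Next, I would invoke Corollary~\ref{cor:main} to conclude that at every scale $r>0$ each map $f_r$ is a homotopy equivalence. Combined with the strict commutativity above, this says that the natural bijection assembles into a morphism of $\R$-indexed diagrams of spaces which is a levelwise homotopy equivalence. By definition of a $0$-homotopy interleaving, such a morphism is itself a $0$-homotopy interleaving: one may pick a homotopy inverse $g_r$ of $f_r$ at each level, and although the $g_r$ need not strictly commute with the filtration inclusions, the identities $f_r g_r \simeq \id$ and $g_r f_r \simeq \id$ together with the strict naturality of the $f_r$ supply the required interleaving homotopies with shift zero.

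Applying the singular homology functor $H_n(-;R)$ yields a levelwise isomorphism of persistence modules, which is in particular a $0$-interleaving and therefore realizes an isomorphism of persistent homology modules. The argument in the \v{C}ech case, with $\viet{\cW}$ specialized to $\acech{X}{Z}{r}$ as in Section~\ref{ssec:vietoris}, is identical.

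This is essentially a bookkeeping argument on top of Corollary~\ref{cor:main}, so I do not anticipate a serious obstacle; the only point requiring care is the choice of conventions for ``$0$-homotopy interleaving,'' which must be the one compatible with having a natural transformation that is levelwise a homotopy equivalence (rather than requiring strictly commuting homotopy inverses). Since the morphism $f_r$ itself is natural in $r$, this compatibility is automatic and no auxiliary construction is needed.
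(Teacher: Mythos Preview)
Your proposal is correct and matches the paper's approach: both arguments observe that the natural bijections assemble into a strictly commuting morphism of filtrations which, by Corollary~\ref{cor:main}, is a levelwise homotopy equivalence, and then record that this is precisely what a $0$-homotopy interleaving requires. The paper's proof is terser (it simply displays the commuting squares), while you spell out the role of the homotopy inverses and flag the convention issue, but the substance is the same.
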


We remark that the existence of a $0$-homotopy interleaving~\cite[Definition~3.5]{blumberg2023universality} is a stronger than simply saying that the homotopy interleaving distance is zero --- indeed, it furthermores that the infimum in the definition of the homotopy interleaving distance~\cite[Definition~3.6]{blumberg2023universality} is attained.
When $X$ is a finite-dimensional Polish metric space, Corollary~\ref{cor:interleaving} implies~\cite[Corollary~3]{gillespie2024vietoris}.

\begin{proof}
This is a consequence of the fact that for all $r\le r'$ the following diagrams commute, where the horizontal arrows are inclusions, and where the vertical arrows are the natural bijections, which are homotopy equivalences.
\[
\begin{tikzcd}
	{\vr{X}{r}} & {\vr{X}{r'}} \\
	{\vrm{X}{r}} & {\vrm{X}{r'}}
	\arrow[hook, from=1-1, to=1-2]
	\arrow["\simeq"', from=1-1, to=2-1]
	\arrow["\simeq", from=1-2, to=2-2]
	\arrow[hook, from=2-1, to=2-2]
\end{tikzcd}
\hspace{15mm}
\begin{tikzcd}
	{\acech{X}{Z}{r}} & {\acech{X}{Z}{r'}} \\
	{\acechm{X}{Z}{r}} & {\acechm{X}{Z}{r'}}
	\arrow[hook, from=1-1, to=1-2]
	\arrow["\simeq"', from=1-1, to=2-1]
	\arrow["\simeq", from=1-2, to=2-2]
	\arrow[hook, from=2-1, to=2-2]
\end{tikzcd}
\]
\end{proof}

\section{Strong local contractibility of $\vietm{\cW}$}
\label{sec:strong-local-contractibility}

In this section we prove that for any uniformly bounded open cover $\cW$ of a compact metric space $X$, the space $\vietm{\cW}$ is strongly locally contractible --- in fact strictly strongly locally contractible.
This means, as defined in Subsection~\ref{ssec:local-contractibility}, that each $\mu\in \vietm{\cW}$ has a local base of neighborhoods which strongly deformation retract onto $\mu$.
As before, the cases when $\cW$ is chosen as in Section~\ref{ssec:vietoris} to recover $\vietm{\cW}=\vrm{X}{r}$ or $\vietm{\cW}=\acechm{X}{Z}{r}$ are especially pertinent.

Let $\cW$ be a uniformly bounded open cover of a compact metrizable space $X$.
By $P(X)$ we denote the set of all probability measures on $X$, and by $M(X)$ we denote the space of all measures of $X$.
By the Riesz theorem, $M(X)$ is isomorphic to the dual of $C(X)$, and so $M(X)$ is a Banach space.
In particular, it makes sense to discuss convex subsets of $P(X)$.

\subsection{Alternative basis}

Since $X$ is compact, we have already stated (see Proposition~\ref{proposition:weaktopology}) that the weak topology on $\vietm{\cW}$ coincides with the one induced by the Wasserstein metric.
In this subsection we will construct a convenient basis of this topology, consisting of sets $N(\eta,\cU,\varepsilon)$ defined below.

For a measure $\eta = \sum_{i=1}^n p_i \delta_{y_i}$ on $X$, a continuous functions $f\colon X\to \mathbb R$, and a subset $Y$ of $X$, we let
\[\eta(Y) = \sum_{y_i\in Y} p_i
\quad\mbox{ and }\quad
\eta(f) = \sum_{i=1}^n p_i f(y_i).\]

Recall that the basis of the weak topology on $P(X)$ is given by the sets of the form 
\[\widetilde{O}(\eta, h_1,\dots ,h_k, \varepsilon) =\{\nu \in P(X)~:~ |\eta(h_i)- \nu(h_i)|<\varepsilon \text{ for } i=1,2,\dots, k\},\]
where $h_1, h_2, \dots ,h_k$ are continuous real-valued functions on $X$ and $\varepsilon >0$.
We define
\[O(\eta, h_1,\dots ,h_k, \varepsilon) 
= \widetilde{O}(\eta, h_1,\dots ,h_k, \varepsilon) \cap \vietm{\cW}.\]

Let $\eta = \sum_{i=1}^n p_i \delta_{y_i}$ be a measure in $\vietm{\cW}$.
Let $\cU=\{U_1, U_2, \dots, U_n\}$ be a collection of disjoint open subsets of $X$ with $y_i\in U_i$ for all $i$, and with $\cup\cU \coloneqq \cup_{i=1}^n U_i$ contained in an element of $\cW$.
For any $\varepsilon >0$, let
\[N(\eta,\cU,\varepsilon) =\left\{\nu \in \vietm{\cW}~:~|\eta(U_i)-\nu(U_i)|<\varepsilon \text{ for all }i\right\}.\]
We will see in Lemma~\ref{lemma:basis3} that the sets of the form $N(\eta,\cU,\varepsilon)$ form a basis for $\vietm{\cW}$ at $\eta$.

\begin{lemma}
\label{lemma:basis1} For each 
\[\nu  =\sum_{j=1}^m w_j \delta_{a_j}\in N(\eta,\cU,\varepsilon)\]
there exists a collection of disjoint open sets $\cV=\{V_1,\dots, V_m\}$ in $X$ and $\delta >0$ such that $\cup\cV$ is contained in an element of $\cW$, such that $a_j\in V_j$ for all $j=1,2,\dots,m$, and such that
\[N(\nu,\cV,\delta)\subseteq N(\eta,\cU,\varepsilon).\]
\end{lemma}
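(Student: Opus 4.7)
The plan is to construct the pairwise-disjoint open neighborhoods $V_j$ of the atoms $a_j$ whose position with respect to the cover $\cU$ is well-controlled, then choose $\delta>0$ small depending on the resulting slack. First I would fix $W'\in\cW$ containing $\{a_1,\ldots,a_m\}$, which exists because $\nu\in\vietm{\cW}$, and record the positive slacks $\gamma_i:=\varepsilon-|\eta(U_i)-\nu(U_i)|>0$ for each $i=1,\ldots,n$.

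Next I would classify each atom $a_j$ according to its position relative to $\cU$: either $a_j\in U_i$ for a unique $i$, or $a_j\in X\setminus\cup\cU$. For each $a_j\in U_i$ pick $V_j$ an open neighborhood of $a_j$ with $V_j\subseteq U_i\cap W'$; for each $a_j\in X\setminus\cup\cU$, pick $V_j\subseteq W'$ an open neighborhood of $a_j$ with $V_j\cap U_i=\emptyset$ whenever $a_j\notin\overline{U_i}$ (possible since $X\setminus\overline{U_i}$ is open). Because the $a_j$'s are finitely many distinct points of the metric space $X$, I can further shrink the $V_j$'s so that $\cV$ is pairwise disjoint and no $V_j$ contains $a_k$ for $k\neq j$, while retaining $\cup\cV\subseteq W'\in\cW$.

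To verify the inclusion, for $\nu'\in N(\nu,\cV,\delta)$ and each $i$ I would decompose
\[\nu'(U_i)=\sum_j\nu'(V_j\cap U_i)+\nu'(U_i\setminus\cup\cV).\]
The complementary term is at most $\nu'(X\setminus\cup\cV)<m\delta$ since $\sum_j\nu'(V_j)>1-m\delta$. For $V_j\subseteq U_i$ (case $a_j\in U_i$), $\nu'(V_j\cap U_i)=\nu'(V_j)\in(w_j-\delta,w_j+\delta)$; for $V_j\cap U_i=\emptyset$, the term vanishes. Summing and combining with $|\nu(V_j)-\nu'(V_j)|<\delta$ yields a bound on $|\nu(U_i)-\nu'(U_i)|$ that is $O(m\delta)$ plus a contribution from atoms of $\nu$ on $\partial U_i\setminus\cup\cU$. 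Choosing $\delta$ small enough (in terms of $\min_i\gamma_i$ and $m$) so that this bound falls below $\gamma_i$, the triangle inequality gives $|\eta(U_i)-\nu'(U_i)|\le|\eta(U_i)-\nu(U_i)|+|\nu(U_i)-\nu'(U_i)|<(\varepsilon-\gamma_i)+\gamma_i=\varepsilon$.

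The step I expect to require the most care is the treatment of boundary atoms $a_j\in\partial U_i\setminus\cup\cU$: any open $V_j\ni a_j$ necessarily meets $U_i$, so a small $\cV$-perturbation can in principle inject up to the mass $w_j$ of such an atom into $U_i$. Closing the argument therefore requires showing that the total boundary mass $\sum_{a_j\in\partial U_i\setminus\cup\cU}w_j$ is automatically dominated by $\gamma_i$, which I anticipate proving through a mass-conservation argument: since $\nu\in N(\eta,\cU,\varepsilon)$ forces $\sum_i\nu(U_i)>1-n\varepsilon$ and hence bounds the total mass outside $\cup\cU$, a careful per-$U_i$ bookkeeping together with the distribution of $\nu$ among atoms in different strata of $\cU$ should give the required estimate.
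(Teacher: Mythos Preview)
Your strategy coincides with the paper's: take $V_j\subseteq U_i$ whenever $a_j\in U_i$, set $\delta$ to be a small multiple of $\min_i\gamma_i/m$, and control $|\eta(U_i)-\nu'(U_i)|$ via the triangle inequality. The paper's displayed estimate reads
\[
|\nu(U_i)-\zeta(U_i)| \;<\; \sum_{V_j\subseteq U_i}|\nu(V_j)-\zeta(V_j)| \;+\; \sum_{c_k\notin\cup\cV}q_k,
\]
which quietly drops the contribution from $\zeta(V_j\cap U_i)$ for those $j$ with $a_j\in\partial U_i\setminus\cup\cU$. You have correctly singled this out as the crux.

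The difficulty is that your proposed mass-conservation patch cannot close this gap. The inequality $\sum_i\nu(U_i)>1-n\varepsilon$ only bounds the \emph{total} mass outside $\cup\cU$; it says nothing about that mass relative to an individual slack $\gamma_i$. Concretely, with $n=3$, $\eta=\tfrac13(\delta_{y_1}+\delta_{y_2}+\delta_{y_3})$, $\varepsilon=0.3$, and
\[
\nu=0.6\,\delta_{a_1}+0.05\,\delta_{a_2}+0.05\,\delta_{a_3}+0.3\,\delta_{a_4},\qquad a_i\in U_i\ (i\le3),\quad a_4\in\partial U_1\setminus\cup\cU,
\]
one has $\nu\in N(\eta,\cU,\varepsilon)$ with $\gamma_1\approx0.033$, yet the boundary mass at $\partial U_1$ is $0.3$. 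For \emph{any} admissible $\cV$ and \emph{any} $\delta>0$, the open set $V_4$ meets $U_1$; sliding $a_4$ to some $a_4'\in V_4\cap U_1$ yields $\nu'$ with $\nu'(V_j)=\nu(V_j)$ for all $j$, hence $\nu'\in N(\nu,\cV,\delta)$, but $\nu'(U_1)=0.9>\tfrac13+\varepsilon$. So no $(\cV,\delta)$ can work here. In functional terms, $\nu\mapsto\nu(U_i)$ is only lower semicontinuous in the weak topology, so the upper bound $\nu(U_i)<p_i+\varepsilon$ does not carve out an open set. Both your argument and the paper's share this gap; the statement as written goes through only under an extra hypothesis such as $\supp\nu\subseteq\cup\cU$, in which case the boundary-atom stratum is empty and your estimate (and the paper's) is valid as stated.
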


\begin{proof}
Let $\delta = \frac{1}{2m}\min\{\varepsilon - |\eta(U_i) - \nu(U_i)|:i=1,2,\dots,n\}$.
Since $\nu\in \vietm{\cW}$, we have that $\{a_1,\ldots,a_m\}$ is contained in an element of $\cW$.
Consider disjoint open neighborhoods $V_1,\dots , V_m$ of $a_1,\dots , a_m$ such that $V_j\subseteq U_i$ whenever $a_j\in U_i$, and such that $\cup\cV$ is contained in an element of $\cW$.
Let $\cV = \{V_1\ldots,V_m\}$.
Let $\zeta =\sum_{k=1}^l q_k \delta_{c_k}\in  N(\nu,\cV,\delta)$.
Then for all $i=1,2, \dots, n$, we have
\begin{align*}
|\nu(U_i) - \zeta (U_i)|
&\le |\nu(U_i) - \nu(U_i)| + |\nu(U_i) - \zeta(U_i)| \\
&< \varepsilon - 2m \delta + \sum_{\{j\,:\,V_j\subseteq U_i\}} |\nu (V_j) -\zeta (V_j)| + \left|\sum_{c_k\notin \cup\cV} q_k \right| \\
&< \varepsilon - 2m\delta + m\delta + m \delta = \varepsilon.
\end{align*}
Therefore $\zeta \in N(\nu,\cU,\varepsilon)$.
\end{proof}


\begin{lemma}
\label{lemma:basis2}
For each set of the form $N(\eta,\cU,\varepsilon)$ there exist functions $h_1,h_2,\dots, h_k$ such that
\[O(\eta, h_1,\dots, h_k, \varepsilon)\subseteq N(\eta,\cU,\varepsilon).\]
\end{lemma}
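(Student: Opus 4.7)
My plan is to take Urysohn-type bump functions that separate the points $y_1,\dots,y_n$ inside the disjoint open sets $U_1,\dots,U_n$, rescaled by a constant factor that allows the same $\varepsilon$ to appear on both sides of the desired inclusion. Concretely, for each $i$ I would first choose a continuous function $f_i\colon X\to[0,1]$ with $f_i(y_i)=1$ and $f_i\equiv 0$ outside $U_i$ (for instance $f_i(x)=\max(0,\,1-d(x,y_i)/r_i)$ for $r_i>0$ small enough that $B(y_i,r_i)\subseteq U_i$), and then set $h_i:=2n\,f_i$. Because $\cU$ is disjoint and $y_j\in U_j$, we have $f_i(y_j)=0$ whenever $j\ne i$, so $\eta(h_i)=2n\,p_i=2n\,\eta(U_i)$; and since $0\le f_i\le \chi_{U_i}$, for every $\nu=\sum_j w_j\delta_{a_j}\in\vietm{\cW}$ we also have $\nu(h_i)\le 2n\,\nu(U_i)$.

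With these $h_i$ in hand, consider an arbitrary $\nu\in O(\eta,h_1,\dots,h_n,\varepsilon)$. The inequality $|\eta(h_i)-\nu(h_i)|<\varepsilon$ together with $\nu(h_i)\le 2n\,\nu(U_i)$ will immediately give the one-sided bound $\nu(U_i)>\eta(U_i)-\varepsilon/(2n)$. For the reverse bound, I would use that $\supp(\eta)\subseteq \bigcup_i U_i$ and the disjointness of $\cU$ force $\sum_i\eta(U_i)=1\ge\sum_i\nu(U_i)$, hence $\sum_i(\nu(U_i)-\eta(U_i))\le 0$; applying the one-sided estimate coordinate by coordinate then yields
\[
\nu(U_i)-\eta(U_i)\ \le\ \sum_{j\ne i}\bigl(\eta(U_j)-\nu(U_j)\bigr)\ <\ (n-1)\,\frac{\varepsilon}{2n}\ <\ \frac{\varepsilon}{2}.
\]
Both estimates are strictly less than $\varepsilon$, so $|\eta(U_i)-\nu(U_i)|<\varepsilon$ for every $i$, placing $\nu$ in $N(\eta,\cU,\varepsilon)$.

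The only genuinely substantive point in the argument is the requirement that the \emph{same} $\varepsilon$ appears in $O$ and $N$: without the scaling, bump functions bounded by $1$ would only yield the weaker inclusion $O(\eta,h_1,\dots,h_n,\varepsilon/(2n))\subseteq N(\eta,\cU,\varepsilon)$. Multiplying by the factor $2n$ absorbs this loss, and the disjointness of $\cU$ together with $\sum_i\eta(U_i)=1$ is precisely what converts the one-sided estimate into its partner. Everything else is routine.
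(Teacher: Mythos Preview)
Your proof is correct and takes a somewhat different route from the paper's. The paper uses $2n$ test functions rather than $n$: in addition to ``inner'' bumps $g_i\colon X\to[0,1]$ supported in $U_i$ with $g_i(y_i)=1$ (these are your $f_i$), it introduces ``outer'' bumps $f_i\colon X\to[0,1]$ with $f_i\equiv 1$ on all of $U_i$ and $f_i\equiv 0$ on each $U_j$ for $j\ne i$. Since $g_i\le\chi_{U_i}\le f_i$ and $\eta(f_i)=\eta(g_i)=\eta(U_i)$, the two test functions directly give the two one-sided estimates $\eta(U_i)-\nu(U_i)\le\eta(f_i)-\nu(f_i)<\varepsilon$ and $\eta(U_i)-\nu(U_i)\ge\eta(g_i)-\nu(g_i)>-\varepsilon$, with no rescaling and no summation trick. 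Your approach instead keeps only the inner bumps, scales them by $2n$ to match the required $\varepsilon$, and then converts the single one-sided estimate into its partner via the global constraint $\sum_i\nu(U_i)\le 1=\sum_i\eta(U_i)$. A small bonus of your version is that the inner bumps always exist in a metric space (your explicit distance-based formula works for any disjoint open $U_i$), whereas continuous outer bumps with $f_i|_{U_i}\equiv 1$ and $f_i|_{U_j}\equiv 0$ need the $U_i$ to be functionally separated, which can fail when their closures touch; the paper's argument, on the other hand, is more symmetric and dispenses with the scaling and the counting.
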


\begin{proof}
Let $\eta = \sum_{i=1}^n p_i \delta_{y_i}$, and let $\cU=\{U_1, U_2, \dots, U_n\}$ be a collection of disjoint open subsets of $X$ with $y_i\in U_i$ for all $i$, and with $\cup\cU$ contained in an element of $\cW$.

Consider functions $f_1,\dots, f_n$ from $X$ to $[0,1]$ such that for all $i=1,2,\dots,n$, we have $f_i(U_i)=\{1\}$ and $f_i(U_j) =\{0\}$ for all $j\ne i$.
Also, consider functions $g_1,\dots, g_n$ from $X$ to $[0,1]$ such that for all $i=1,2,\dots,n$, we have $g_i(y_i) = 1$ and $g_i(X\setminus U_i) =\{0\}$.
We claim that 
\[O(\eta, f_1,\dots, f_n, g_1,\dots, g_n, \varepsilon)\subseteq N(\eta,\cU,\varepsilon).\]
Consider $\nu \in O(\eta, f_1,\dots, f_n, g_1,\dots, g_n, \varepsilon)$.
Note that 
\[\eta(U_i) - \nu (U_i) = \eta(f_i) - \nu (U_i) \le \eta (f_i) - \nu (f_i) <\varepsilon\]
and
\[\eta (U_i) - \nu(U_i) = \eta (g_i) - \nu (U_i) \ge \eta (g_i) - \nu (g_i) > -\varepsilon.\]
This implies $|\eta (U_i) - \nu (U_i)|< \varepsilon$ and therefore $\nu \in N(\eta,\cU,\varepsilon)$.
\end{proof}

\begin{lemma}
\label{lemma:basis3}
The sets of the form $N(\eta,\cU,\varepsilon)$ form a basis for $\vietm{\cW}$ at $\eta$.
\end{lemma}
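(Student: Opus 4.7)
The plan is to verify two things: (i) each $N(\eta,\cU,\varepsilon)$ is an open neighborhood of $\eta$ in $\vietm{\cW}$, and (ii) every open neighborhood of $\eta$ in $\vietm{\cW}$ contains some $N(\eta,\cU,\varepsilon)$. The containment $\eta \in N(\eta,\cU,\varepsilon)$ is immediate from $|\eta(U_i) - \eta(U_i)| = 0 < \varepsilon$. For openness I would combine the two preceding lemmas: given $\nu \in N(\eta,\cU,\varepsilon)$, Lemma~\ref{lemma:basis1} yields $\cV,\delta$ with $N(\nu,\cV,\delta) \subseteq N(\eta,\cU,\varepsilon)$, and then Lemma~\ref{lemma:basis2} produces continuous functions $h_1,\ldots,h_k$ so that the weakly open set $O(\nu,h_1,\ldots,h_k,\delta)$ is contained in $N(\nu,\cV,\delta)$, exhibiting $\nu$ as an interior point of $N(\eta,\cU,\varepsilon)$.

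For (ii), given any open neighborhood $W$ of $\eta$ in $\vietm{\cW}$, I would use that the sets of the form $O(\eta,h_1,\ldots,h_k,\varepsilon')$ form a basis for the weak topology at $\eta$ (which, by Proposition~\ref{proposition:weaktopology}, coincides with the Wasserstein topology) to pick continuous $h_1,\ldots,h_k\colon X\to \R$ and $\varepsilon' > 0$ with $O(\eta,h_1,\ldots,h_k,\varepsilon') \subseteq W$. The task then reduces to finding $\cU$ and $\varepsilon$ such that $N(\eta,\cU,\varepsilon) \subseteq O(\eta,h_1,\ldots,h_k,\varepsilon')$. Writing $\eta = \sum_{i=1}^n p_i\delta_{y_i}$, I would use uniform continuity of each $h_j$ on the compact space $X$ to pick disjoint open neighborhoods $U_i \ni y_i$, with $\cup\cU$ contained in some element of $\cW$, small enough that $|h_j(x) - h_j(y_i)| < \varepsilon'/2$ for every $x\in U_i$ and every $j$. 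Setting $M = \max_j \|h_j\|_\infty$ (finite by compactness of $X$) and choosing $\varepsilon$ sufficiently small relative to $M$, $n$, and $\varepsilon'$ should make the estimate go through.

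The main obstacle is this final estimate: for $\nu \in N(\eta,\cU,\varepsilon)$, one must split $|\eta(h_j) - \nu(h_j)|$ into a contribution from $\cup\cU$, controlled by the oscillation bound $\varepsilon'/2$ of $h_j$ on each $U_i$ together with the discrepancies $|p_i - \nu(U_i)| < \varepsilon$, and a contribution from $X\setminus \cup\cU$. Although $\nu$ may charge mass anywhere in $X$ outside of $\cup\cU$, the total escaping mass is bounded by $\nu(X\setminus \cup\cU) = \sum_i (p_i - \nu(U_i)) < n\varepsilon$, and on this piece $|h_j|$ is at most $M$. Combining the two contributions and ordering the choices of $\cU$ and $\varepsilon$ carefully yields $|\eta(h_j) - \nu(h_j)| < \varepsilon'$ for each $j$, so that $\nu \in O(\eta,h_1,\ldots,h_k,\varepsilon') \subseteq W$, as required.
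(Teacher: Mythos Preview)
Your proposal is correct and follows essentially the same approach as the paper: openness of $N(\eta,\cU,\varepsilon)$ via Lemmas~\ref{lemma:basis1} and~\ref{lemma:basis2}, and then for (ii) the same decomposition of $|\eta(h_j)-\nu(h_j)|$ into an inside-$\cup\cU$ part (controlled by the oscillation of $h_j$ on each $U_i$ together with $|p_i-\nu(U_i)|<\varepsilon$) and an outside-$\cup\cU$ part (controlled by $\nu(X\setminus\cup\cU)<n\varepsilon$ times $M$). The paper carries out the same estimate with the explicit choices $M=1+\max_{t,x}|h_t(x)|$ and $\delta=\varepsilon'/(M(2n+1))$, and uses only continuity of the $h_j$ at the points $y_i$ rather than uniform continuity, but this is a cosmetic difference.
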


\begin{proof}
Let $\eta = \sum_{i=1}^n p_i \delta_{y_i}$.
Lemmas~\ref{lemma:basis1} and~\ref{lemma:basis2} imply that sets of the form $N(\nu,\cU,\varepsilon)$ are open in the weak topology.
Consider an open set $O(\nu, h_1,\dots , h_k, \varepsilon)$.
Let
\[M = 1+\max \{ |h_t(x)|~:~ x\in X,\ t=1,2,\dots , k\},\]
and let $\delta = \frac{\varepsilon}{ M(2n+1)}$.
There exists a collection $\cU=\{U_1, \dots , U_n\}$ of disjoint open neighborhoods  of $y_1, \dots, y_n$ such that $\cup\cU$ is contained in an element of $\cW$, and such that for each $i=1,2,\dots, n$ we have $|h_t(y_i) - h_t(x)| < \delta$ for all $x\in U_i$ and $t=1,2,\dots, k$.
Pick $\nu =\sum_{j=1}^m w_j \delta _{a_j} \in N(\eta,\cU,\delta)$.
Then
\begin{align*}
|\eta (h_t) - \nu (h_t)|
&= \left|\sum_{i=1}^n p_i h_t(y_i) - \sum_{j=1}^m w_j h_t(a_j)\right| \\
&\le \left|\sum_{i=1}^n \left( p_i h_t(y_i) - \sum_{a_j\in U_i} w_j h_t(a_j)\right)\right| +\left|\sum_{a_j\notin \cup\cU} w_j h_t(a_j)\right| \\  
&\le \left|\sum_{i=1}^n\left(p_i h_t(y_i) - \sum_{a_j\in U_i} w_jh_t(y_i)\right)\right|+ \sum_{i=1}^n\left(\sum_{a_j\in U_i} w_j |h_t(y_i)-h_t(a_j)|\right)  +M\sum_{a_j\notin \cup\cU} w_j \\
&< M\sum_{i=1}^n \left|p_i - \sum_{a_j\in U_i} w_j\right|+\delta + Mn\delta \\
&\le M\sum_{i=1}^n|\eta(U_i) - \nu (U_i)|+M(n+1)\delta \\
&\le Mn\delta +M(n+1)\delta = \varepsilon.
\end{align*}
 Therefore $\nu\in O(\eta , h_1,\dots, h_k,\varepsilon)$.
\end{proof}

\subsection{Strong local contractibility via Michael's selection theorem for multivalued maps}
\label{ssec:slc}

Our proof that $\vietm{\cW}$ is strictly strongly locally contractible in Theorem~\ref{thm:strong-local-contractibility} will use Michael's selection theorem~\cite{michael1956continuous}, which we recall now.

A multivalued map $F\colon X\to Y$ assigns to each point $x\in X$ a subset $F(x)\subseteq Y$.
A \emph{selection} of $F$ is a function $f\colon X\to Y$ with $f(x)\in F(x)$ for all $x\in X$.

\begin{theorem}[Michael's selection theorem~\cite{michael1956continuous}]
\label{theorem:selection}
Let $X$ be a paracompact space, let $Y$ be a Banach space, and let $F\colon X\to Y$ be a lower semi-continuous multivalued map with non-empty, convex, and closed values.
Then $F$ has a continuous selection $f\colon X\to Y$.
Moreover, if $A$ is a closed subset of $X$ and $g\colon A\to Y$ is a continuous selection of $F|_A$ then $f$ can be assumed to extend $g$.
\end{theorem}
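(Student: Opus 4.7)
The plan is to follow the classical line of argument from Michael's 1956 paper: construct a continuous selection as the uniform limit of successively sharper approximate selections, and handle the extension clause by modifying the multivalued map on $A$ and reducing to the unconstrained case.

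\textbf{Approximate selection lemma.} The first step I would prove is that for every $\varepsilon > 0$ there exists a continuous $f_\varepsilon\colon X\to Y$ with $d(f_\varepsilon(x), F(x)) < \varepsilon$ for all $x\in X$. Cover $Y$ by open $\varepsilon$-balls $\{B(y,\varepsilon)\}_{y\in Y}$ and pull back through $F^{-1}$ to obtain an open cover of $X$; openness of each preimage uses lower semi-continuity of $F$, and the covering property uses $F(x)\neq\emptyset$. Paracompactness of $X$ then yields a locally finite partition of unity $\{\phi_\alpha\}$ subordinate to a refinement of the form $\{F^{-1}(B(y_\alpha,\varepsilon))\}$, and I would define $f_\varepsilon(x) = \sum_\alpha \phi_\alpha(x) y_\alpha$. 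Each $y_\alpha$ with $\phi_\alpha(x)>0$ lies within $\varepsilon$ of $F(x)$, so the convex combination $f_\varepsilon(x)$ lies in the open $\varepsilon$-neighborhood of $F(x)$, using convexity of that neighborhood in the Banach space $Y$.

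\textbf{Cauchy iteration.} Starting from an approximate selection $f_0$ with $d(f_0(x), F(x)) < 1$, I would inductively build multivalued maps $F_n(x) = F(x)\cap \overline{B(f_n(x), 2^{-n})}$, each with non-empty, closed, convex values, and apply the approximate selection lemma with $\varepsilon = 2^{-n-1}$ to obtain $f_{n+1}$. The technical heart of the argument is to verify that each $F_n$ is lower semi-continuous: given an open $V\subseteq Y$ meeting $F_n(x_0)$, one exploits convexity of $F(x_0)$ together with an interior point of $F_n(x_0)$ to produce a point of $F(x_0)\cap V$ lying strictly inside the ball $B(f_n(x_0), 2^{-n})$, and then combines lower semi-continuity of $F$ with continuity of $f_n$ to spread this to a neighborhood of $x_0$. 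Once $F_n$ is lsc, the telescoping estimate $\|f_{n+1}(x)-f_n(x)\|\le 2^{-n}+2^{-n-1}$ makes the sequence $(f_n)$ uniformly Cauchy, and its limit $f$ is a continuous selection of $F$ by closedness of $F(x)$.

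\textbf{Extension step.} For the final clause, given closed $A\subseteq X$ and continuous $g\colon A\to Y$ selecting $F|_A$, I would define $\tilde F(a)=\{g(a)\}$ for $a\in A$ and $\tilde F(x)=F(x)$ otherwise, and apply the already-proved part of the theorem to $\tilde F$, since any selection $f$ of $\tilde F$ automatically extends $g$. The main obstacle at this stage is showing that $\tilde F$ remains lower semi-continuous, especially at points $a\in A$: for open $V\subseteq Y$ with $g(a)\in V$, continuity of $g\colon A\to Y$ gives an open $U_2\ni a$ in $X$ with $g(U_2\cap A)\subseteq V$, while $g(a)\in F(a)\cap V$ together with lower semi-continuity of $F$ gives an open $U_1\ni a$ with $U_1\subseteq F^{-1}(V)$; then $U_1\cap U_2 \subseteq \tilde F^{-1}(V)$, so $\tilde F^{-1}(V)$ is open, as required.
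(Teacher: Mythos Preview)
The paper does not prove this theorem at all: it is stated as a classical black box, with a citation to Michael's 1956 paper, and then applied in the proof of Theorem~\ref{thm:strong-local-contractibility}. So there is no ``paper's own proof'' to compare against.

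That said, your sketch is the standard Michael argument and is essentially correct. One small imprecision: in the lsc verification for $F_n$, the phrase ``interior point of $F_n(x_0)$'' is misleading, since $F(x_0)$ may have empty interior in $Y$. What you actually need (and what your parenthetical description more accurately captures) is a point of $F(x_0)$ lying in the \emph{open} ball $B(f_n(x_0),2^{-n})$, which exists because $d(f_n(x_0),F(x_0))<2^{-n}$; convexity of $F(x_0)$ then lets you perturb any boundary point of $F_n(x_0)\cap V$ into the open ball while staying in $F(x_0)\cap V$. With that clarification the iteration goes through as you describe.
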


Recall that a multivalued map $F\colon X\to Y$ is \emph{lower semi-continuous} if for any open subset $S$ of $Y$, the set $\{x\in X\mid F(x)\cap S\ne \varnothing\}$ is open in $X$.

We now build up notation necessary for the proof of Theorem~\ref{thm:strong-local-contractibility}.
For any subset $U$ of $X$, we define
\begin{align*}
P_U &= \{ \mu\in \vietm{\cW}\mid \mathrm{supp}\, \mu \cap U\ne \varnothing\} \\
P^U &= \{ \mu\in \vietm{\cW} \mid \mathrm{supp}\, \mu \subseteq U\}.
\end{align*}
Furthermore, for a finite collection $\cU=\{U_1, U_2, \dots, U_n\}$ of subsets $U_i\subseteq X$, let
\[P_{\cU} = \bigcap_{i=1}^n P_{U_i}.\]

Propositions~\ref{prop:open},~\ref{prop:convex}, and~\ref{prop:convex2} have short proofs, which we omit.

\begin{proposition}
\label{prop:open}
If $U$ is an open subset of $X$, then $P_U$ is open in $\vietm{\cW}$.
If $\cU=\{U_1, \dots, U_n\}$ is a collection of open subsets of $X$, then $P_{\cU} $ is open in $\vietm{\cW}$.
\end{proposition}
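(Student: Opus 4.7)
The plan is to deduce both statements directly from the basis description of the weak topology established in Lemma~\ref{lemma:basis3}; the basic neighborhoods $N(\eta, \cU, \varepsilon)$ are defined precisely in terms of the measure assigned to open sets around the support of $\eta$, which is exactly the language needed to detect whether a measure's support intersects a given open set. The second statement will then be immediate, since $P_{\cU}$ is a finite intersection of the sets $P_{U_i}$, and finite intersections of open sets are open; so the real work is only in the first statement.

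For the first statement, we take $\mu = \sum_{i=1}^n p_i \delta_{y_i} \in P_U$, written so that the $y_i$ are distinct and all $p_i > 0$, so that $\{y_1, \ldots, y_n\} = \supp(\mu)$. Since $\mu \in P_U$, some support point---which we relabel as $y_1$---lies in $U$. Since $\mu \in \vietm{\cW}$, there exists $W \in \cW$ with $\{y_1, \ldots, y_n\} \subseteq W$. Using that $X$ is Hausdorff and that $U$ and $W$ are open, we select pairwise disjoint open neighborhoods $U_i$ of $y_i$ with $U_1 \subseteq U \cap W$ and each $U_i \subseteq W$; then $\cU = \{U_1, \ldots, U_n\}$ is admissible for a basic neighborhood of $\mu$ in the sense of Lemma~\ref{lemma:basis3}, as $\cup \cU$ lies in the cover element $W$.

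With $\varepsilon = p_1$, we then claim $N(\mu, \cU, p_1) \subseteq P_U$. Since the $U_i$ are disjoint and $y_1$ is the unique support point of $\mu$ lying in $U_1$, we have $\mu(U_1) = p_1$, so for any $\nu = \sum_j w_j \delta_{a_j} \in N(\mu, \cU, p_1)$, the inequality $|\nu(U_1) - p_1| < p_1$ forces $\nu(U_1) > 0$; hence some $a_j \in U_1$ has $w_j > 0$, giving $a_j \in \supp(\nu) \cap U_1 \subseteq \supp(\nu) \cap U$ and thus $\nu \in P_U$. I do not expect any serious obstacle here: the only step requiring attention is the construction of the $U_i$'s satisfying the disjointness and containment conditions, a standard Hausdorff-separation argument inside the open set $W$; the substantive content has already been packaged into the alternative basis of Lemma~\ref{lemma:basis3}.
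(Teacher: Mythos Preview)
Your argument is correct. The paper itself omits the proof of this proposition, simply declaring that it (together with the next two propositions) has a short proof; so there is no ``paper's approach'' to compare against in detail.

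That said, a slightly more direct route---perhaps closer in spirit to what the authors had in mind by calling it short---avoids the alternative basis of Lemma~\ref{lemma:basis3} altogether and works straight from the definition of the weak topology. Given $\mu = \sum_i p_i \delta_{y_i} \in P_U$ with $y_1 \in U$, choose a continuous $g\colon X \to [0,1]$ with $g(y_1)=1$ and $g|_{X\setminus U}=0$; then $\mu(g) \ge p_1 > 0$, the set $\{\nu \in \vietm{\cW} : \nu(g) > 0\}$ is weakly open and contains $\mu$, and any $\nu$ with $\nu(g)>0$ must have support meeting $U$. This bypasses the need to assemble a full admissible collection $\cU$ with $\cup\cU$ inside a cover element. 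Your approach, routing through Lemma~\ref{lemma:basis3}, is equally valid and has the mild advantage of staying within the neighborhood language used throughout Section~\ref{ssec:slc}.
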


\begin{proposition}
\label{prop:convex}
For any $Y\subseteq X$, the set $P^Y$ is a convex subset of $P(X)$.
In particular, $P^Y$ is contractible, and every $\mu\in P^Y$ is a strong deformation retract of $P^Y$.
\end{proposition}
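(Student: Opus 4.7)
The plan is to unwind the definitions and then exhibit the standard straight-line homotopy. For the convexity claim, I will take $\mu_1,\mu_2\in P^Y$ and $t\in[0,1]$ and observe that $\supp\bigl(t\mu_1+(1-t)\mu_2\bigr)\subseteq\supp\mu_1\cup\supp\mu_2\subseteq Y$; the convex combination remains inside $\vietm{\cW}$ whenever $Y$ is contained in a single element of $\cW$, which is the situation in all intended applications (where $Y$ arises as $\cup\cU$ from Proposition~\ref{prop:open}).

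Once convexity is in hand, I will fix an arbitrary $\mu\in P^Y$ and define the straight-line homotopy
\[
H\colon P^Y\times[0,1]\to P^Y,\qquad H(\nu,t)=(1-t)\nu+t\mu.
\]
This satisfies $H(\nu,0)=\nu$ and $H(\nu,1)=\mu$, and crucially $H(\mu,t)=\mu$ for every $t$, so $\mu$ is a strong deformation retract of $P^Y$; contractibility of $P^Y$ then follows immediately by fixing any such $\mu$.

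The main, and really only, technical point is the continuity of $H$. Since $X$ is compact, Proposition~\ref{proposition:weaktopology} identifies the Wasserstein topology on $\vietm{\cW}$ with the weak topology, so it will suffice to check that for each bounded continuous $f\colon X\to\R$,
\[
\int_X f\, dH(\nu,t)=(1-t)\int_X f\, d\nu+t\int_X f\, d\mu
\]
varies continuously in $(\nu,t)$; this is transparent from the definition of the weak topology, since $\nu\mapsto\int_X f\,d\nu$ is continuous by definition. No substantial obstacle is anticipated—the proposition is essentially the statement that a convex subset of the Banach space $M(X)$ deformation retracts onto any of its points via the straight-line homotopy, fixing that point throughout.
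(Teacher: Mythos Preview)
Your proposal is correct and follows the natural line of argument; the paper itself omits the proof, remarking only that it is short. Your caveat that convexity of $P^Y$ genuinely requires $Y$ to lie inside a single element of $\cW$ (so that convex combinations of elements of $P^Y$ remain in $\vietm{\cW}$) is well observed: as literally stated, the proposition fails for arbitrary $Y\subseteq X$---taking $Y=X$ gives $P^Y=\vietm{\cW}$, which the paper itself later notes is typically not convex---but in every application within the paper the relevant $Y$ is $\cup\cU$ for a collection $\cU$ contained in an element of $\cW$, so the argument goes through exactly as you describe.
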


Throughout this subsection we fix $\mu =\sum_{i=1}^n m_i \delta_{x_i}$ to be a measure in $\vietm{\cW}$.
Let $\cU=\{U_1, U_2, \dots, U_n\}$ be a collection of disjoint open subsets of $X$ with $x_i\in U_i$ for all $i$, and with $\cup\cU$ contained in an element of $\cW$.
For any $\varepsilon >0$, let
\[ P(\mu,\cU,\varepsilon) = \{ \nu \in \vietm{\cW}~:~\supp(\nu) \subseteq \cup\cU \mbox { and } | \nu(U_i) - \mu(U_i)|\le \varepsilon \mbox{ for all } i \}.\]

\begin{proposition}
\label{prop:convex2}
$P(\mu,\cU,\varepsilon)$ is convex.
\end{proposition}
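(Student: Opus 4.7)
The plan is to verify convexity directly from the three defining conditions of $P(\mu,\cU,\varepsilon)$. Given $\nu_1, \nu_2 \in P(\mu,\cU,\varepsilon)$ and $t \in [0,1]$, I would form the convex combination $\nu = t\nu_1 + (1-t)\nu_2$ and check that (i) $\nu \in \vietm{\cW}$, (ii) $\supp(\nu) \subseteq \cup\cU$, and (iii) $|\nu(U_i) - \mu(U_i)| \leq \varepsilon$ for each $i$. Note that $\nu$ is automatically a probability measure (and finitely supported, as a sum of two finitely supported measures), so no further work is needed on that front.

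The first two conditions both reduce to the support inclusion $\supp(\nu) \subseteq \supp(\nu_1) \cup \supp(\nu_2) \subseteq \cup\cU$, together with the standing hypothesis that $\cup\cU$ is contained in some $W \in \cW$; this forces $\nu$ to be a finitely supported probability measure whose support is contained in $W$, and hence $\nu \in \vietm{\cW}$. The third condition is where I would use the linearity of the mass functional $\nu \mapsto \nu(U_i)$: writing $\nu(U_i) = t\nu_1(U_i) + (1-t)\nu_2(U_i)$, the triangle inequality gives
\begin{align*}
|\nu(U_i) - \mu(U_i)| &= |t(\nu_1(U_i) - \mu(U_i)) + (1-t)(\nu_2(U_i) - \mu(U_i))| \\
&\leq t\,\varepsilon + (1-t)\,\varepsilon \;=\; \varepsilon.
\end{align*}

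I do not expect any substantive obstacle: the argument is a packaging exercise confirming that each clause in the definition of $P(\mu,\cU,\varepsilon)$ is individually convex — the set of probability measures supported in a fixed set is convex, and each preimage of $[\mu(U_i)-\varepsilon,\mu(U_i)+\varepsilon]$ under the linear map $\nu \mapsto \nu(U_i)$ is convex, so their intersection is convex as well. This is presumably why the authors chose to omit the proof.
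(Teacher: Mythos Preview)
Your argument is correct and is exactly the direct verification one expects; the paper itself omits the proof, so your approach is in line with what the authors intended.
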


Let $\tilde{B}(\mu)$ be an open ball in $P(X)$ with respect to the Wasserstein metric, centered at $\mu$, and let $B(\mu) = \tilde{B}(\mu)\cap \vietm{\cW}$.
Note that $\tilde{B}(\mu)$ is convex in $P(X)$, while $B(\mu)$ typically is not: the reason is that, for example, in the Vietoris--Rips metric thickenings, the union of two supports of diameter less than $r$ can have diameter above $r$.
Recall $\cU=\{U_1, U_2, \dots, U_n\}$ is a collection of disjoint open subsets of $X$ such that $x_i\in U_i$ for all $i$, and such that $\cup\cU$ is contained in an element of $\cW$.
We will prove that the open neighborhoods $P_{\cU}\cap B(\mu)$ of $\mu$ are strongly contractible for sufficiently small covers $\cU$.

Note that, for an infinite compactum $X$, the space $P(X)$  can be regarded as a convex compact infinite-dimensional subset of the normed space $\mathbb R^{C(X)}$.
Therefore it is affinely and topologically embeddable in the Hilbert (and hence Banach) space $l_2$ by Klee's theorem~\cite[Theorem~1.1]{klee1955some}.


Define a multivalued map $F\colon P_{\cU}\cap B(\mu) \to P(X)$ by
\begin{equation}
\label{eq:F}
F(\zeta)=\{\nu\in P(\mu,\cU,\varepsilon)~|~\supp(\nu)\subseteq\supp(\zeta)\}.
\end{equation}
 
We will use Michael's selection theorem~\cite{michael1956continuous} to obtain a continuous selection $f\colon P_{\cU}\cap B(\mu) \to P(\mu,\cU,\varepsilon)$.
Thus, we  need to prove that 
the values of $F$ are non-empty, convex, and closed, and that $F$ is lower semi-continuous.

\begin{proposition}
\label{prop:nonempty}
The values of $F$ are non-empty, convex, and closed for any $\zeta\in P_{\cU}\cap B(\mu)$.
\end{proposition}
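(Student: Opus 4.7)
The plan is to verify the three properties separately; only closedness will require any real care. Throughout, recall that the hypothesis $\zeta\in P_{\cU}$ means $\supp(\zeta)\cap U_i\ne\varnothing$ for every $i=1,\dots,n$, and that since the $U_i$ are pairwise disjoint with $x_i\in U_i$, one has $\mu(U_i)=m_i$.

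For non-emptiness I would use $\zeta\in P_{\cU}$ to pick a point $p_i\in \supp(\zeta)\cap U_i$ for each $i$, and then exhibit
\[\nu := \sum_{i=1}^n m_i\,\delta_{p_i}\]
as an explicit element of $F(\zeta)$. Because the $U_i$ are disjoint, this $\nu$ satisfies $\nu(U_i)=m_i=\mu(U_i)$, so the $\varepsilon$-inequalities hold with equality; the inclusions $\supp(\nu)\subseteq\supp(\zeta)$ and $\supp(\nu)\subseteq\cup\cU$ are immediate; and $\nu\in\vietm{\cW}$ because $\cup\cU$ lies in an element of $\cW$.

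Convexity is routine. Given $\nu_1,\nu_2\in F(\zeta)$ and $t\in[0,1]$, the combination $\nu := t\nu_1+(1-t)\nu_2$ has $\supp(\nu)\subseteq\supp(\nu_1)\cup\supp(\nu_2)\subseteq\supp(\zeta)\cap\cup\cU$, which both places $\nu$ in $\vietm{\cW}$ (same witnessing element of $\cW$) and preserves the support condition. The $\varepsilon$-bound is preserved because $\nu(U_i)-\mu(U_i)$ is a convex combination of $\nu_j(U_i)-\mu(U_i)$ for $j=1,2$, each of which has absolute value at most $\varepsilon$.

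The one subtle step is closedness: neither $\supp(\nu)\subseteq\cup\cU$ nor the $\varepsilon$-bound $|\nu(U_i)-\mu(U_i)|\le\varepsilon$ is a closed condition on $P(X)$ in the weak topology, since both involve the open sets $U_i$ across whose boundary mass can leak in a weak limit; in particular $P(\mu,\cU,\varepsilon)$ itself is typically not closed in $P(X)$. What rescues the argument is the additional constraint $\supp(\nu)\subseteq\supp(\zeta)$: since $\supp(\zeta)$ is a finite, hence closed, subset of $X$, any weak limit of measures supported on $\supp(\zeta)$ is again supported there. Thus $F(\zeta)$ lies inside the finite-dimensional simplex $\Delta$ of probability measures on $\supp(\zeta)$; on $\Delta$ the weak topology reduces to the Euclidean topology on the simplex coefficients, each map $\nu\mapsto\nu(U_i)$ is just the continuous function summing the coefficients at the points of $\supp(\zeta)\cap U_i$, and the defining inequalities of $F(\zeta)$ cut out a closed subset of $\Delta$. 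Since $\Delta$ is closed in $P(X)$ and $P(X)$ is compact and hence embeds as a closed subset of $l_2$ via the Klee embedding recalled before the proposition, $F(\zeta)$ is closed in the ambient Banach space, as Michael's selection theorem requires.
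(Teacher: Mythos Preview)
Your proof is correct and follows the same approach as the paper's: the paper exhibits an explicit measure for non-emptiness (using all points of $\supp(\zeta)\cap U_i$ rather than your single $p_i$, but either works), invokes the earlier convexity propositions, and for closedness simply says it ``follows from the fact that the support of all measures from $F(\zeta)$ is contained in the support of $\zeta$, and from the definition of $P(\mu,\cU,\varepsilon)$.'' Your closedness argument is considerably more explicit than the paper's --- you correctly identify that $P(\mu,\cU,\varepsilon)$ alone is not closed in $P(X)$ and that the finite-support constraint is what reduces everything to a closed subset of a finite simplex --- so you have filled in detail the paper leaves implicit.
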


\begin{proof}
By the assumption, $\supp(\zeta)$  contains a set of the form 
$\{ y^i_1,y^i_2,\dots, y^i_{n_i}\}_{i=1}^n$, where $y^i_j\in U_i$ for each $i$.
Let 
\[\nu =\sum_{i=1}^n \frac{m_i}{n_i} \sum_{j=1}^{n_i} \delta_{y^i_j},\]
where $\mu =\sum_{i=1}^n m_i \delta_{x_i}$.
It is easy to check that $\nu\in F(\zeta)$.

That $F$ is closed-valued follows from the fact that the support of all measures from $F(\zeta)$ is contained in the support of $\zeta$, and from the definition of $P(\mu,\cU,\varepsilon)$.
The convexity follows from Propositions~\ref{prop:convex} and~\ref{prop:convex2}.
\end{proof}

\begin{proposition} \label{prop:lsc}
The multivalued map $F\colon P_{\cU}\cap B(\mu) \to P(X)$ defined in~\eqref{eq:F} is lower semi-continuous.
\end{proposition}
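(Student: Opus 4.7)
The plan is to verify lower semi-continuity directly from the definition: given $\zeta_0 \in P_{\cU}\cap B(\mu)$, an open set $S \subseteq P(X)$, and $\nu_0 \in F(\zeta_0) \cap S$, I would construct an explicit open neighborhood $N$ of $\zeta_0$ in $P_\cU \cap B(\mu)$ such that $F(\zeta) \cap S \neq \varnothing$ for every $\zeta \in N$. Write $\nu_0 = \sum_{k=1}^m w_k \delta_{b_k}$; since $\nu_0 \in P(\mu,\cU,\varepsilon)$, we have $\supp(\nu_0) \subseteq \cup\cU$, so there is an index function $\alpha$ with $b_k \in U_{\alpha(k)}$ for each $k$. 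Likewise, since $\nu_0 \in F(\zeta_0)$, each $b_k$ belongs to $\supp(\zeta_0)$.

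The first step is to use Lemma~\ref{lemma:basis3} to replace $S$ by a basic neighborhood $N(\nu_0,\cV,\delta)$ contained in $S \cap \vietm{\cW}$, where $\cV=\{V_1,\dots,V_m\}$ is a collection of disjoint open sets with $b_k \in V_k$ and $\cup\cV$ inside an element of $\cW$. Inspecting the proof of Lemma~\ref{lemma:basis3}, I can additionally arrange that $V_k \subseteq U_{\alpha(k)}$: the $U_{\alpha(k)}$ are disjoint open sets already containing $b_k$, so one simply intersects each $V_k$ produced by the construction with $U_{\alpha(k)}$, preserving disjointness and the requirement that $\cup \cV$ lies in a single element of $\cW$ (indeed, $\cup\cV \subseteq \cup\cU$, which is contained in an element of $\cW$ by hypothesis).

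Next, choose open neighborhoods $W_k$ of $b_k$ with $\overline{W_k}\subseteq V_k$, set $\cW' = \{W_1,\dots,W_m\}$, and define $N := P_{\cW'} \cap (P_\cU \cap B(\mu))$; by Proposition~\ref{prop:open} this is open in $P_\cU \cap B(\mu)$, and it contains $\zeta_0$ because every $b_k \in \supp(\zeta_0) \cap W_k$. For any $\zeta \in N$, pick $d_k \in \supp(\zeta) \cap W_k$ and form $\nu := \sum_{k=1}^m w_k \delta_{d_k}$. I then verify $\nu \in F(\zeta) \cap S$ via three observations: (i) $\supp(\nu) \subseteq \supp(\zeta)$ by construction; (ii) since $d_k \in V_k \subseteq U_{\alpha(k)}$ and the $U_i$ are disjoint, $\nu(U_i) = \nu_0(U_i)$ for every $i$, hence $\nu \in P(\mu,\cU,\varepsilon)$; and (iii) since the $V_k$ are disjoint and $d_k \in V_k$, we have $\nu(V_k) = w_k = \nu_0(V_k)$, so $\nu \in N(\nu_0,\cV,\delta) \subseteq S$.

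The crux of the argument is the refinement step: arranging that the basic open set from Lemma~\ref{lemma:basis3} can be taken with $V_k \subseteq U_{\alpha(k)}$ while still being contained in the given open set $S$. Once this compatibility between the $\cV$-partition and the $\cU$-partition is secured, the mass-matching identities $\nu(U_i)=\nu_0(U_i)$ and $\nu(V_k)=\nu_0(V_k)$ are automatic from the disjointness of the two collections, and lower semi-continuity follows immediately.
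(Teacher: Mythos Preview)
Your argument is correct and follows essentially the same strategy as the paper's proof: refine a basic neighborhood $N(\nu_0,\cV,\delta)\subseteq S$ so that each $V_k$ sits inside the appropriate $U_{\alpha(k)}$, then for nearby $\zeta$ pick one support point in each $V_k$ and transplant the masses $w_k$ to those points. The only cosmetic differences are that the paper takes the neighborhood of $\zeta_0$ to be a basic set $N(\zeta_0,\cV,\beta)$ with $\beta$ smaller than all weights (forcing $\supp\xi$ to meet each $V^i_j$), whereas you use $P_{\cW'}$ directly via Proposition~\ref{prop:open}; and your auxiliary sets $W_k$ with $\overline{W_k}\subseteq V_k$ are not actually needed---taking $W_k=V_k$ already works.
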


\begin{proof}
To check that $F$ is lower semi-continuous, we must show that for any open subset $S$ of $P(X)$, the set $\{\xi\in P_{\cU}\cap B(\mu)\mid F(\xi)\cap S\ne \varnothing\}$ is open in $P_{\cU}\cap B(\mu)$.
We proceed along the following scheme: for any $\zeta \in P_{\cU}\cap B(\mu)$, any $\nu \in F(\zeta)$, and any arbitrarily small neighborhood $N$ of $\nu$, we define a small neighborhood $N'$ of $\zeta$ such that for each $\xi \in N'$, there exists $\lambda \in F(\xi) \cap N$.

Let
\[\zeta = \sum_{i=1}^n  \sum_{j=1}^{n_i} w^i_j \delta_{y^i_j}+\sum_{t=1}^k w_t \delta_{y_t},\]
where $y^i_j\in U_i$ for all $i,j$, and $y_t\notin \cup\cU$ for all $t=1,2,\dots,k$.
Pick $\nu\in F(\zeta)$.
Next, consider an open neighborhood of $\nu$ which, by Lemma~\ref{lemma:basis3}, may be assumed to be of the form $N=N(\nu,\widetilde \cU,\alpha)$.
Moreover, if 
$\cV=\{V^i_j\mid i=1,\dots,n,\, j=1,\dots, n_i\}$ is a collection of 
disjoint open neighborhoods of the points $y^i_j$ such that $V^i_j\subseteq U_i$ for all $i$, 
and if $\cV'=\{V^i_j\mid y^i_j \in \supp \nu\}$, then
we may further assume that 
$N=N(\nu,\cV',\alpha)$.
Choose $\beta >0$ so that $\beta < w^i_j$ for all $i,j$, and so that
$N' \coloneqq N(\zeta,\cV,\beta)$
is contained in $P_{\cU}\cap B(\mu)$, the domain of $F$.
Consider $\xi \in N'$.
Note that the choice of $\beta$ implies that $\supp \xi \cap V^i_j \ne \varnothing$ for all $i,j$.
For each $i=1,2,\dots ,n$ and $j=1,2,\dots, n_i$, pick $z^i_j\in \supp \xi \cap V^i_j$, and let 
\[\lambda = \sum_{
V^i_j\in \cV'
} \nu (V^i_j) \delta_{z^i_j}.\]
It is easy to see that $\lambda (U_i) = \nu(U_i)$ for $i=1,2,\dots ,n$, and that $\supp\lambda \subseteq \supp \xi$.
Therefore $\lambda \in F(\xi)$.
Moreover, $\lambda (V^i_j) = \nu (V^i_j)$ for all $V^i_j\in \cV'$, and hence $\lambda \in N$.
Thus, $\lambda \in F(\xi)\cap N$, as required.
\end{proof}

\begin{theorem}
\label{thm:strong-local-contractibility}
If $X$ is a compact metrizable space then $\vietm{\cW}$ is strictly strongly locally contractible.
\end{theorem}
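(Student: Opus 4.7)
The plan is to show that for each $\mu=\sum_{i=1}^n m_i\delta_{x_i}\in\vietm{\cW}$ and each neighborhood $V$ of $\mu$, one can produce an open neighborhood $V'\subseteq V$ of $\mu$ that strongly deformation retracts onto $\mu$. I will take $V'=P_\cU\cap B(\mu)$, where $\cU=\{U_1,\dots,U_n\}$ is a collection of disjoint open neighborhoods of the points $x_i$ whose union is contained in some $W\in\cW$, and $B(\mu)=\tilde B(\mu)\cap\vietm{\cW}$ for a Wasserstein ball $\tilde B(\mu)$ centered at $\mu$. The ``local basis'' half of the conclusion is automatic: since the Wasserstein metric induces the topology on $\vietm{\cW}$ by Proposition~\ref{proposition:weaktopology}, the balls $B(\mu)$ already form a neighborhood basis at $\mu$, so for any $V$ one can first shrink $\tilde B(\mu)$ so that $B(\mu)\subseteq V$, and then $P_\cU\cap B(\mu)\subseteq V$ for free.

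I will choose the parameters so that (i) $\varepsilon\in(0,\min_i m_i)$, which forces every $\nu\in P(\mu,\cU,\varepsilon)$ to satisfy $\nu(U_i)>0$ and hence $\supp(\nu)\cap U_i\ne\varnothing$; and (ii) the diameters of the $U_i$ and the size of $\varepsilon$ are small enough that $P(\mu,\cU,\varepsilon)\subseteq B(\mu)$, which follows from a direct Wasserstein estimate since both $\mu$ and every $\nu\in P(\mu,\cU,\varepsilon)$ are supported in $\bigcup\cU$ with matching masses in each $U_i$ up to $\varepsilon$. Propositions~\ref{prop:nonempty} and~\ref{prop:lsc} then verify the hypotheses of Michael's selection theorem for the multivalued map $F$ from~\eqref{eq:F}, with target the Banach space $l_2$ via the Klee embedding recalled just before~\eqref{eq:F}. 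Since $\mu\in F(\mu)$, the ``moreover'' clause of Theorem~\ref{theorem:selection} applied with $A=\{\mu\}$ and $g(\mu)=\mu$ yields a continuous selection $f\colon P_\cU\cap B(\mu)\to P(\mu,\cU,\varepsilon)$ with $f(\mu)=\mu$.

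The strong deformation retract will be the two-stage concatenation
\[
H(\zeta,t)=\begin{cases}(1-2t)\zeta+2tf(\zeta)& t\in[0,\tfrac12],\\ (2-2t)f(\zeta)+(2t-1)\mu& t\in[\tfrac12,1].\end{cases}
\]
Continuity (the formulas agree at $t=\tfrac12$), the endpoint conditions $H(\zeta,0)=\zeta$ and $H(\zeta,1)=\mu$, and the fixed-point property $H(\mu,t)=\mu$ (using $f(\mu)=\mu$) are all routine. The main verification, which I expect to be the principal obstacle, is that $H$ lands in $P_\cU\cap B(\mu)$. Stage~2 is immediate: by convexity of $P(\mu,\cU,\varepsilon)$ (Proposition~\ref{prop:convex2}) together with $P(\mu,\cU,\varepsilon)\subseteq P_\cU\cap B(\mu)$, the path $(2-2t)f(\zeta)+(2t-1)\mu$ stays in $P_\cU\cap B(\mu)$. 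Stage~1 is subtler because a convex combination of two measures in $\vietm{\cW}$ need not lie in $\vietm{\cW}$: the union of two supports, each inside an element of $\cW$, may fail to lie in any single element. The key observation is that $\supp(f(\zeta))\subseteq\supp(\zeta)$ by the definition of $F$, so the support of $(1-2t)\zeta+2tf(\zeta)$ is contained in $\supp(\zeta)$ for every $t$; hence the stage-1 path remains in $\vietm{\cW}$. Membership in $\tilde B(\mu)$ follows from convexity of the Wasserstein ball, and membership in $P_\cU$ follows because the support of the combination contains either $\supp(\zeta)$ (for $t<\tfrac12$) or $\supp(f(\zeta))$ (at $t=\tfrac12$), each of which meets every $U_i$ by the setup in the previous paragraph.
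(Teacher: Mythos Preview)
Your proposal is correct and follows essentially the same route as the paper: choose the neighborhood $P_\cU\cap B(\mu)$, apply Michael's selection theorem to the multivalued map $F$ of~\eqref{eq:F} (with the relative clause at $A=\{\mu\}$), and then concatenate the straight-line homotopy $\zeta\rightsquigarrow f(\zeta)$ with the convex contraction of $P(\mu,\cU,\varepsilon)$ onto $\mu$. The only substantive difference is that the paper obtains the inclusion $P(\mu,\cU,\varepsilon)\subseteq B(\mu)$ by invoking the alternative-basis Lemma~\ref{lemma:basis3}, whereas you do it by a direct Wasserstein estimate after shrinking the $U_i$; both work, and your explicit check that Stage~1 stays in $P_\cU$ is a detail the paper leaves implicit.
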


\begin{proof}
As everywhere in this section, let $\mu =\sum_{i=1}^n m_i \delta_{x_i}$ be a measure in $\vietm{\cW}$.
We must show that $\mu$ has a local base of neighborhoods which strongly deformation retract onto $\mu$.

Consider an open ball $B(\mu)$.
Because of Lemma~\ref{lemma:basis3}, we can find $\varepsilon >0$ and a collection $\cU = \{U_1, U_2, \dots, U_n\}$ of disjoint open neighborhoods of $x_1,x_2,\dots, x_n$, such that $\cup\cU$ is contained in an element of $\cW$, and $P(\mu,\cU,\varepsilon)\subseteq B(\mu)$.
By choosing sufficiently small $\varepsilon$ we can also require that $P(\mu,\cU,\varepsilon)\subseteq P_{\cU}$.

Let $F\colon P_{\cU}\cap B(\mu) \to P(X)$ be the multivalued map, defined, as before, by 
\[F(\zeta)=\{\nu\in P(\mu,\cU,\varepsilon)~|~\supp(\nu)\subseteq\supp(\zeta)\}.
\]
By Propositions~\ref{prop:nonempty} and~\ref{prop:lsc}, $F$ is a lower semi-continuous map with nonempty closed and convex values.
Therefore, we can apply Michael's selection theorem~\cite{michael1956continuous} (see Theorem~\ref{theorem:selection}) to obtain a continuous selection $f\colon P_{\cU}\cap B(\mu) \to P(\mu,\cU,\varepsilon)$.
Moreover, by the relative version of the selection theorem, we may assume that $f(\mu) = \mu$.

To construct a deformation retraction of $P_{\cU}\cap B(\mu)$ to $\mu$ that fixes $\mu$, we first join $\zeta$ to $f(\zeta)$ by a straight-line homotopy.
Indeed, note that for all $0\le t\le 1$ we have $(1-t)\zeta+t \cdot f(\zeta)\in \vietm{\cW}\cap \ \tilde{B}(\mu) = B(\mu)$ since $\supp f(\zeta)\subseteq \supp \zeta$, while the continuity of this straight-line homotopy follows from~\cite[Proposition~2.4]{AMMW} or~\cite[Lemma~3.9]{AAF}.
The image of $P_{\cU}\cap B(\mu)$ under this retraction is contained in $P(\mu,\cU,\varepsilon)$, which is convex by Proposition~\ref{prop:convex2}.
And any convex set containing $\mu$ can be strongly deformation retracted onto $\mu$, finishing the proof.
\end{proof}

\begin{corollary}
\label{cor:strong-local-contractibility}
Let $X$ be a compact metric space and let $r>0$.
Then $\vrm{X}{r}$ is strictly strongly locally contractible.
If $Z\supseteq X$ is a metric space extending the metric on $X$, then $\acechm{X}{Z}{r}$ is strictly strongly locally contractible.
\end{corollary}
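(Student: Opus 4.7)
The plan is to deduce this corollary directly from Theorem~\ref{thm:strong-local-contractibility} by exhibiting suitable uniformly bounded open covers $\cW$ of $X$ such that $\vietm{\cW}$ recovers each of $\vrm{X}{r}$ and $\acechm{X}{Z}{r}$. The identifications are already spelled out in Section~\ref{ssec:vietoris}, and so the work reduces to checking the uniform boundedness hypothesis in each case.

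First, for the Vietoris--Rips metric thickening, I would take $\cW$ to be the collection of all open subsets of $X$ of diameter strictly less than $r$, so that $\viet{\cW}=\vr{X}{r}$ and consequently $\vietm{\cW}=\vrm{X}{r}$. Every member of $\cW$ has diameter less than $r$ by construction, so $\cW$ is uniformly bounded with constant $r$. Applying Theorem~\ref{thm:strong-local-contractibility} to this compact metrizable $X$ and cover $\cW$ immediately yields that $\vrm{X}{r}$ is strictly strongly locally contractible.

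Next, for the ambient \v{C}ech metric thickening, I would take $\cW=\{X\cap B_Z(z;r)\}_{z\in Z}$, which gives $\viet{\cW}=\acech{X}{Z}{r}$ and hence $\vietm{\cW}=\acechm{X}{Z}{r}$. Since $\diam(X\cap B_Z(z;r))\le \diam(B_Z(z;r))\le 2r$, the cover is uniformly bounded with constant $2r$. A second application of Theorem~\ref{thm:strong-local-contractibility} then shows that $\acechm{X}{Z}{r}$ is strictly strongly locally contractible. (The intrinsic \v{C}ech case $\cechm{X}{r}$ is the special case $Z=X$.) There is no genuine obstacle here: all the technical work has already been carried out in Theorem~\ref{thm:strong-local-contractibility}, and the only point to verify is the trivial uniform bound on the diameters of the sets in each cover.
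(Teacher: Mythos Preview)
Your proposal is correct and matches the paper's own treatment: the corollary is stated without proof, as it follows immediately from Theorem~\ref{thm:strong-local-contractibility} by choosing the covers $\cW$ as in Section~\ref{ssec:vietoris}. The only (trivial) imprecision is that for the ambient \v{C}ech cover one has $\diam(X\cap B_Z(z;r))\le 2r$, so strictly speaking the uniform bound should be any $D>2r$ rather than $D=2r$; this does not affect the argument.
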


\section{Conclusion}
\label{sec:conclusion}

We end with some open questions.

\begin{enumerate}

\item For $X$ compact but not necessarily finite-dimensional, is $\vrm{X}{r}$ an ANR or an ANE?

\item Is it true that for $X$ compact, any two sufficiently close maps into $\vrm{X}{r}$ are homotopic?
(This is known to be true for ANRs, and hence is a relaxation of Question (1).)

\item For $X$ an arbitrary metric space, is $\vrm{X}{r}$ strongly locally contractible?

\item When can a map $S^n\to \vrm{X}{r}$ from the $n$-sphere be deformed (up to homotopy) to have its image contained in $\vrm{X}{r} \cap P_{n+1}(X)$?

\item Let $X$ be an ANR.
When is $\vrm{X}{r} \cap P_{n+1}(X)$ an ANR?


\end{enumerate}

\bibliographystyle{plain}
\bibliography{VietorisThickeningsAndComplexesOfManifoldsAreHomotopyEquivalent}

\end{document}